\def \k {\mathbbm{k}}
\def \End {\operatorname{End}}
\def \dim {\operatorname{dim}}
\def \R {\mathbbm{R}}
\def \C {\mathbbm{C}}
\numberwithin{equation}{section}
\numberwithin{table}{section}
\numberwithin{equation}{section}
\newtheorem{theorem}{Theorem}[section]
\newtheorem{lemma}[theorem]{Lemma}
\newtheorem{proposition}[theorem]{Proposition}
\newtheorem{corollary}[theorem]{Corollary}
\newtheorem{definition}[theorem]{Definition}
\newtheorem{example}[theorem]{Example}
\newtheorem{remarks}[theorem]{Remarks}
\title{Centers of Multilinear Forms and Applications}
\thanks{Supported by NSFC 11911530172, 11971181 and 11971449.}
\subjclass[2010]{15A69, 11E76, 14J70}
\keywords{multilinear form, tensor, direct sum, congruence}
\author{Hua-Lin Huang, Huajun Lu, Yu Ye and Chi Zhang}
\address{Huang and Lu: School of Mathematical Sciences, Huaqiao University, Quanzhou 362021, China}
\email{hualin.huang@hqu.edu.cn, huajun@hqu.edu.cn}
\address{Ye: School of Mathematical Sciences, University of Science and Technology of China, Hefei 230026, China}
\email{yeyu@ustc.edu.cn}
\address{Zhang: Department of Mathematics, Northeastern University, Shenyang 110819, China}
\email{zhangchi@mail.neu.edu.cn}
\date{}                                           
\begin{document}

\begin{abstract}
In this paper we study the center algebras of multilinear forms. It is shown that the center of a nondegenerate multilinear form is a finite dimensional commutative algebra and can be effectively applied to its direct sum decompositions. As an application of the algebraic structure of centers, we also show that almost all multilinear forms are absolutely indecomposable. The theory of centers can be extended to multilinear maps and be applied to their symmetric equivalence. Moreover, with a help of the results of symmetric equivalence, we are able to provide a linear algebraic proof of a well known Torelli type result which says that two complex homogeneous polynomials with the same Jacobian ideal are linearly equivalent.
\end{abstract}

\maketitle

\section{Introduction}
Let $V$ be a vector space over a field $\k$ of dimension $n.$ Let $d \geq 3$ be a positive integer. A $d$-linear form on $V$ is a multilinear mapping $\Theta:V^d=V\times \cdots \times V \to \k$ and is denoted by $(V,\Theta)$ or $\Theta$ for short. Take a basis $e_1, e_2, \dots, e_n$ of $V$ and set $a_{i_1 i_2 \cdots i_d}=\Theta(e_{i_1}, e_{i_2}, \dots, e_{i_d}).$ The resulting tensor $A=(a_{i_1 i_2 \cdots i_d})_{1 \le i_1, i_2, \dots, i_d \le n}$ is called the associated tensor of $(V,\Theta)$ under the basis $e_1, e_2, \dots, e_n.$ A fundamental problem in invariant theory and multilinear algebra is finding canonical forms for multilinear forms under base change, or equivalently, canonical forms of tensors under congruence by invertible matrices.

Unlike bilinear forms, it seems hopeless to find complete representatives for $d$-linear forms, see for example \cite{bs0, fgs}. One of our main concerns is direct sum decompositions of multilinear forms, that is to find whether there exist nonzero subspaces $V_1, V_2, \dots, V_m$ of $(V, \Theta)$ such that $V=V_1 \oplus V_2 \oplus \cdots \oplus V_m$ and $\Theta(v_1, \cdots, v_d)=0$ unless all the $v_i$'s are in the same $V_j$ for some $j.$ In terms of tensors, this is equivalent to finding an invertible matrix $P$ such that the congruent tensor $AP^d$ is block diagonal. This is a natural problem as direct sum decompositions provide dimension reduction for multilinear forms.

In \cite{hlyz,hlyz2}, we investigated direct sum decompositions of symmetric multilinear forms via Harrison's theory of centers \cite{h1}. The authors showed that the problem can be boiled down to some standard tasks of linear algebra, specifically the computations of eigenvalues and eigenvectors. The main aim of the present paper is to extend \cite{hlyz,hlyz2} to the situation of multilinear forms.

We generalize the key notion of centers as follows.
\begin{definition} \label{cc} 
Given a $d$-linear form $(V, \Theta),$ set
\begin{equation}
Z(V,\Theta) \colon = \left\{ \phi \in \operatorname{End}(V) \left|
\begin{array}{c}
  \Theta(v_1,  \cdots, \phi(v_{i}), \cdots, v_{j}, \cdots v_d) \\
  =\Theta(v_1, \cdots, v_i,\cdots, \phi(v_j),\cdots, v_d), \\
  \forall 1\leq i,j\leq d, \ \ \forall v_1, \cdots, v_d \in V
\end{array}
\right. \right\} \end{equation}
and call it the center of $(V, \Theta).$
\end{definition}
\noindent Elements of centers for multilinear forms were also defined and applied to direct sum decomposition in \cite{bs}, where they were called self-adjoint linear mappings. However, the algebraic structure of all centers, or self-adjoint elements were not considered therein.

The centers of multilinear forms enjoy the same properties as those of symmetric multilinear forms or equivalently homogeneous polynomials, cf. \cite{h1, hlyz, hlyz2}.
\begin{theorem}\label{cd}
Suppose $(V,\Theta)$ is a nondegenerate $d$-linear form. Then
\begin{itemize}
\item[(1)] The center $Z(V,\Theta)$ is a commutative algebra.
\item[(2)] There is a one-to-one correspondence between  direct sum decompositions of $(V,\Theta)$ and complete sets of orthogonal idempotents of $Z(V,\Theta).$
\item[(3)] The decomposition of $(V,\Theta)$ into a direct sum of indecomposable $d$-linear forms is unique up to permutation of indecomposable summands.
\end{itemize}
\end{theorem}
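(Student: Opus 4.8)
\emph{Proof proposal.} The plan is to follow Harrison's treatment of symmetric forms (cf. \cite{h1, hlyz, hlyz2}) but to arrange the combinatorics so that the hypothesis $d\ge 3$ does the work that symmetry used to do. For part (1), note first that $Z(V,\Theta)$ is visibly a linear subspace of $\End(V)$ containing $\operatorname{id}_V$, hence finite dimensional, and that the defining relations say precisely that for $\phi\in Z(V,\Theta)$ the scalar $\Theta(v_1,\dots,v_d)$ is unchanged if we "slide $\phi$" from one argument to another. The key computation is then: for $\phi,\psi\in Z(V,\Theta)$ and any $v_1,\dots,v_d$, slide $\phi$ from slot $1$ to slot $2$, slide $\psi$ from slot $1$ to slot $3$ (here $d\ge 3$ is used, to have a third slot distinct from the first two), slide $\phi$ back from slot $2$ to slot $1$, and slide $\psi$ back from slot $3$ to slot $1$; comparing the two ends gives $\Theta\big((\phi\psi-\psi\phi)(v_1),v_2,\dots,v_d\big)=0$ for all arguments, so nondegeneracy forces $\phi\psi=\psi\phi$. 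A similar three-slot sliding argument (slide $\phi$ from slot $i$ to a third slot $k$, slide $\psi$ from slot $i$ to slot $j$, slide $\phi$ from slot $k$ to slot $j$) shows that $\phi\psi$ again satisfies the defining relations, i.e.\ $Z(V,\Theta)$ is closed under composition; together with commutativity this gives (1).

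For part (2), given a decomposition $V=V_1\oplus\cdots\oplus V_m$ as in the statement, let $\epsilon_j$ be the projection onto $V_j$ along the other summands. Multilinearity together with the vanishing condition shows that $\Theta(v_1,\dots,\epsilon_j v_i,\dots,v_d)$ and $\Theta(v_1,\dots,\epsilon_j v_k,\dots,v_d)$ both collapse to $\Theta$ evaluated on the $V_j$-components of all the $v_l$'s, whence $\epsilon_j\in Z(V,\Theta)$; and $\{\epsilon_1,\dots,\epsilon_m\}$ is plainly a complete set of (nonzero) orthogonal idempotents. Conversely, from such a set $\{\epsilon_1,\dots,\epsilon_m\}\subseteq Z(V,\Theta)$ put $V_j=\Im\epsilon_j$, so that $V=\bigoplus_j V_j$; if $v_l\in V_{c(l)}$ with $c(i)\ne c(k)$, then sliding $\epsilon_{c(i)}$ from slot $i$ (where it fixes $v_i$) to slot $k$ turns the entry there into $\epsilon_{c(i)}\epsilon_{c(k)}v_k=0$, so $\Theta(v_1,\dots,v_d)=0$, i.e.\ the $V_j$ give a decomposition of the required type. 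These two assignments are mutually inverse by the usual correspondence between projections and direct-sum decompositions, proving (2).

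For part (3), I would first establish the compatibility statement that makes everything fit together: for a decomposition $V=\bigoplus_i V_i$ as in (2), each restriction $\Theta_i:=\Theta|_{V_i^{\,d}}$ is again nondegenerate (since, by the vanishing condition, $\Theta(v,\cdot,\dots,\cdot)$ for $v\in V_i$ only sees the $V_i$-components of the remaining arguments), and restriction $\phi\mapsto(\phi|_{V_i})_i$ is an algebra isomorphism $Z(V,\Theta)\xrightarrow{\ \sim\ }\prod_i Z(V_i,\Theta_i)$ — well defined because central elements commute with the $\epsilon_i$ and hence preserve the $V_i$, surjective by forming $\bigoplus_i\psi_i$, injective since $V=\bigoplus V_i$. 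In particular $(V_i,\Theta_i)$ is indecomposable if and only if $Z(V_i,\Theta_i)$ has no nontrivial idempotents, equivalently the corresponding idempotent of $Z(V,\Theta)$ is primitive. Now invoke the classical structure theory of finite-dimensional commutative algebras (available by part (1)): $Z(V,\Theta)$ is a finite product of local algebras, so $1$ has an essentially unique expression as a sum of primitive orthogonal idempotents. Via (2) this yields a decomposition of $(V,\Theta)$ into indecomposables; any other such decomposition corresponds, again by (2) and the isomorphism above, to another complete set of primitive orthogonal idempotents, which must agree with the first up to order, and transporting this back through (2) gives uniqueness up to permutation.

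The step I expect to be the real crux is the three-slot sliding identity in part (1): everything in (2) is bookkeeping with multilinearity and the vanishing condition, and (3) is a formal consequence of (1), (2) and standard commutative algebra, but the computation in (1) is exactly where the passage from symmetric to general multilinear forms could fail, and it is the precise point at which $d\ge 3$ is indispensable — for $d=2$ the "center" is only the set of self-adjoint operators, which is neither closed under composition nor commutative in general.
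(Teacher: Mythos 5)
Your proposal is correct and follows essentially the same route as the paper: the three-slot ``sliding'' computations for closure and commutativity of $Z(V,\Theta)$ (this is exactly where the paper also uses $d\ge 3$), the projection/idempotent dictionary for (2), and uniqueness of the complete set of primitive orthogonal idempotents in a finite-dimensional commutative algebra for (3). One small point of hygiene: since nondegeneracy here means that $u$ must annihilate $\Theta$ in \emph{every} slot before one may conclude $u=0$, you should record the identity $\Theta(v_1,\dots,(\phi\psi-\psi\phi)(v_i),\dots,v_d)=0$ for each slot $i$ (the identical sliding argument gives this), not only for the first slot, before invoking nondegeneracy to get $\phi\psi=\psi\phi$.
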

\noindent As a consequence, we have a simple algorithm for direct sum decompositions of arbitrary multilinear forms which is equivalent to the classical eigenvalue problem of matrices, see \cite[Algorithm 3.12]{hlyz}.

Let $T_{d,n}$ be the set of all $d$-linear forms on a $n$-dimensional linear $\k$-space. If a multilinear form is not a direct sum, then we say it is indecomposable. It is clear by Theorem \ref{cd} that $(V, \Theta) \in T_{d,n}$ is indecomposable if and only if $Z(V, \Theta)$ is a local algebra. In particular, if $(V, \Theta)$ is central, i.e. $Z(V, \Theta) \cong \k,$ then $(V, \Theta)$ is absolutely indecomposable. It was already noticed in \cite[Remark 10]{bs} that multilinear forms are more likely indecomposable. We confirm this with a help of the center theory. In fact, we show in terms of elementary algebraic geometry that almost all multilinear forms are absolutely indecomposable.

\begin{theorem} \label{cod}
The set of all central $d$-linear forms is Zariski open and dense in $T_{d,n}.$
\end{theorem}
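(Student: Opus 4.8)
Fix a basis of $V$ and identify $T_{d,n}$ with the affine space $\mathbb{A}^{n^d}$ of tensors $A=(a_{i_1\cdots i_d})$; in particular $T_{d,n}$ is irreducible. For $\phi\in\End(V)$ the relations defining $Z(V,\Theta)$ in Definition~\ref{cc} are linear in the matrix of $\phi$, with coefficients linear in $A$; moreover they are unchanged when $\phi$ is rescaled or a multiple of $\operatorname{id}$ is added to it, and $\operatorname{id}$ always satisfies them. Hence $\k\,\operatorname{id}\subseteq Z(V,\Theta)$ for every $\Theta$, and $\Theta$ is central exactly when $Z(V,\Theta)$ contains no element outside $\k\,\operatorname{id}$.

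The plan is to exhibit the non-central locus as the image of an incidence variety. Put
\[
\widetilde{W}=\bigl\{\,(\Theta,[\phi])\in T_{d,n}\times\mathbb{P}\bigl(\End(V)/\k\,\operatorname{id}\bigr)\ :\ \phi\in Z(V,\Theta)\,\bigr\}.
\]
By the remarks above the membership condition is bihomogeneous of bidegree $(1,1)$ in $A$ and in a representative of $[\phi]$, and it is independent of the representative, so $\widetilde{W}$ is well defined and Zariski closed. Let $p,q$ be its two projections. Since $\mathbb{P}(\End(V)/\k\,\operatorname{id})$ is complete, $q$ is a closed map, so $q(\widetilde{W})$ is closed in $T_{d,n}$; and $q(\widetilde{W})=\{\Theta:\Theta\text{ not central}\}$ by construction. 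It therefore suffices to prove $\dim\widetilde{W}<n^d$: then $q(\widetilde{W})$ is a proper closed subset of the irreducible variety $T_{d,n}$, and its complement — the set of central forms — is open and dense.

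For the dimension count, the base of $p$ has dimension $n^2-2$, and the fibre of $p$ over $[\phi]$ is the linear subspace $L_\phi=\{\Theta:\phi\in Z(V,\Theta)\}$ of $T_{d,n}$; writing $c(\phi)=\operatorname{codim}L_\phi$, one gets $\dim\widetilde{W}\le (n^2-2)+n^d-\min c(\phi)$, the minimum taken over non-scalar $\phi$. So the crux is the estimate $c(\phi)\ge n^2-1$ for every $\phi\notin\k\,\operatorname{id}$. To prove it, keep only the center relations coming from the first two slots: for each fixed $(k_3,\dots,k_d)$ they assert that the $n\times n$ matrix $B_{k_3\cdots k_d}=(a_{ij\,k_3\cdots k_d})_{i,j}$ satisfies $\phi^{\mathsf{T}}B_{k_3\cdots k_d}=B_{k_3\cdots k_d}\phi$. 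Because every square matrix is conjugate to its transpose, the solution space of $\phi^{\mathsf{T}}B=B\phi$ has dimension equal to that of the centralizer of $\phi$ in $\End(V)$, which for non-scalar $\phi$ is at most $(n-1)^2+1$ — the largest value a non-scalar centralizer can attain, immediate from the formula $\sum_i(2i-1)\mu_i$ (summed over eigenvalues) for the centralizer dimension in terms of Jordan block sizes $\mu_1\ge\mu_2\ge\cdots$. Since the $n^{d-2}$ blocks $B_{k_3\cdots k_d}$ involve pairwise disjoint coordinates of $A$,
\[
c(\phi)\ \ge\ n^{d-2}\bigl(n^2-(n-1)^2-1\bigr)\ =\ 2n^{d-2}(n-1)\ \ge\ 2n(n-1)\ \ge\ n^2-1
\]
for $d\ge 3$, so $\dim\widetilde{W}\le (n^2-2)+n^d-(n^2-1)=n^d-1<n^d$, as needed.

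I expect the fibre estimate $c(\phi)\ge n^2-1$ to be the only non-formal point; the rest is the standard ``incidence variety and projection'' mechanism together with irreducibility of $T_{d,n}$. For comparison, one can also see openness of the central locus by hand — it is the non-vanishing locus of some $(n^2-1)\times(n^2-1)$ minor of the matrix (linear in $A$) of the center equations, using $\dim Z(V,\Theta)\ge 1$ — after which density would follow from irreducibility once a single central $d$-linear form is exhibited, e.g. by a short direct computation with a sufficiently ``rigid'' tensor or by reduction to the symmetric case treated in \cite{hlyz,hlyz2}.
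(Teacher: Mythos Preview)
Your proof is correct, but it takes a genuinely different route from the paper. The paper's argument is essentially the short alternative you sketch in your final paragraph: the center equations \eqref{ec} are linear in the $x_{ij}$ with coefficients linear in the $a_{i_1\cdots i_d}$, so centrality is the condition that the coefficient matrix $B$ has rank $n^2-1$; hence the central locus is the union of the principal open sets cut out by the $(n^2-1)$-minors of $B$ and is therefore Zariski open. Non-emptiness is supplied by the explicit constructions in Example~\ref{s}, and density then follows from irreducibility of $T_{d,n}$.

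Your main argument replaces the explicit example by a dimension count on the incidence variety $\widetilde{W}\subset T_{d,n}\times\mathbb{P}(\End(V)/\k\,\mathrm{id})$, together with properness of the projective factor to get closedness of the non-central locus. The payoff is that you never need to exhibit a central form: the bound $c(\phi)\ge 2n^{d-2}(n-1)\ge n^2-1$ for non-scalar $\phi$ forces $\dim\widetilde{W}<n^d$ directly. The cost is the extra algebraic-geometric machinery and the centralizer-dimension estimate (which, as you note, is the only substantive step). The paper's approach is more elementary and self-contained once Example~\ref{s} is in hand; yours is more uniform and would transfer unchanged to settings where cooking up an explicit central example is less obvious.
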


When we consider simultaneously several $d$-multilinear forms $\Theta_i:V\times \cdots \times V\to \k$ for $i=1,\cdots,r$, it  is convenient to consider the multilinear map $(\Theta_1,\cdots,\Theta_r):V\times \cdots \times V\to \k^r$. Centers of multilinear forms can be naturally extended to multilinear maps. Motivated by \cite{bfms, bs}, we focus on direct sum decomposition and symmetric equivalence of multilinear maps. We show that the center theory is highly effective in dealing with these problems. Interestingly enough, the results of symmetric equivalence can be applied to provide a simple linear algebraic proof for a well known Torelli type result of Donagi \cite[Proposition 1.1]{d}.

\begin{theorem}\label{do}
Suppose the field $\k$ is algebraically closed and its characteristic is zero or greater than $d$. If $f$ and $g$ are two homogeneous polynomials of degree $d$ with the same Jacobian ideal, then they are related by an invertible linear transformation.
\end{theorem}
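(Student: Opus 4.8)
The plan is to encode the hypothesis $J(f)=J(g)$ as the existence of an invertible element in the center of the symmetric $d$-linear form attached to $f$, and then to produce the required linear transformation by extracting a $d$-th root of that central element inside the center. Since the characteristic of $\k$ is $0$ or $>d$, every homogeneous polynomial $h$ of degree $e\le d$ corresponds bijectively to a symmetric $e$-linear form $\Theta_h$ with $h(x)=\Theta_h(x,\dots,x)$, and under this correspondence $\partial_i h$ matches the symmetric $(e-1)$-linear form $(u_1,\dots,u_{e-1})\mapsto e\,\Theta_h(e_i,u_1,\dots,u_{e-1})$. In particular the degree-$(d-1)$ piece of $J(f)$ is exactly the span $\langle\partial_1 f,\dots,\partial_n f\rangle$, so $J(f)=J(g)$ gives $\langle\partial_i f\rangle=\langle\partial_i g\rangle$ inside $\k[x_1,\dots,x_n]_{d-1}$.

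I would first reduce to the case where $\Theta_f$ is nondegenerate. Put $r=\dim\langle\partial_i f\rangle=\dim\langle\partial_i g\rangle$. If $r<n$, a common linear change of coordinates makes $\partial_{r+1}f=\dots=\partial_n f=0$, so that $f\in\k[x_1,\dots,x_r]$; since then $\langle\partial_i g\rangle\subseteq\k[x_1,\dots,x_r]$ as well, each $\partial_j g$ with $j>r$ has all its partials zero and is homogeneous of positive degree, hence vanishes, so $g\in\k[x_1,\dots,x_r]$ too. Both then become degree-$d$ forms in $r$ variables with linearly independent partials and the same Jacobian ideal, and an invertible transformation found for them extends by the identity on the remaining coordinates. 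So assume $\partial_1 f,\dots,\partial_n f$ are linearly independent, which is precisely nondegeneracy of $\Theta_f$.

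Write $\partial_i g=\sum_j M_{ij}\partial_j f$ with $M\in\GL_n(\k)$ and let $\gamma\in\GL(V)$ be given by $\gamma(e_i)=\sum_j M_{ij}e_j$; polarizing yields $\Theta_g(v,u_1,\dots,u_{d-1})=\Theta_f(\gamma v,u_1,\dots,u_{d-1})$ for all arguments. The right-hand side is symmetric in all $d$ slots because $\Theta_g$ is; together with the symmetry of $\Theta_f$ in its last $d-1$ slots this says exactly that $\gamma\in Z(V,\Theta_f)$. By Theorem~\ref{cd}(1) this center is commutative, and being a subalgebra of $\End(V)$ it is finite-dimensional; moreover $\gamma$ is a unit in it. Hence the subalgebra $\k[\gamma]$ is a finite product of local Artinian $\k$-algebras, each with residue field $\k$ since $\k$ is algebraically closed, and in each factor $\gamma$ has the form $\lambda(1+m)$ with $\lambda\in\k^\times$ and $m$ nilpotent. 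As $\k$ is algebraically closed $\lambda$ has a $d$-th root, and as $d$ is invertible $1+m$ has one via the terminating binomial series; therefore there is an invertible $\psi\in Z(V,\Theta_f)$ with $\psi^d=\gamma$.

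Finally, because $\psi$ is central, iterating its defining identity transports the powers of $\psi$ from the first slot of $\Theta_f$ across the others, giving $\Theta_f(\psi^d v_1,v_2,\dots,v_d)=\Theta_f(\psi v_1,\psi v_2,\dots,\psi v_d)$ for all $v_1,\dots,v_d\in V$. Combining,
\[
\Theta_g(v_1,\dots,v_d)=\Theta_f(\gamma v_1,v_2,\dots,v_d)=\Theta_f(\psi v_1,\dots,\psi v_d),
\]
so $\Theta_g=\Theta_f\circ\psi$, and restriction to the diagonal gives $g(x)=f(\psi x)$ with $\psi$ invertible, which is the claim. I expect the heart of the argument — and the only genuinely non-routine point — to be the middle step: recognizing that the matrix recombining the partials of $f$ into those of $g$ is forced to be a central element of $\Theta_f$, so that the commutative-algebra structure supplied by Theorem~\ref{cd}(1) becomes available to take a $d$-th root and then redistribute it evenly over all $d$ variables. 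The degenerate-case reduction and the bookkeeping of polarization constants are routine.
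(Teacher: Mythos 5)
Your proof is correct, and its first two thirds---the reduction to the nondegenerate case, the matrix $M$ recombining the partials of $f$ into those of $g$, and the observation that the induced endomorphism $\gamma$ lies in $Z(V,\Theta_f)$---coincide with the paper's argument (the paper cites an external lemma for the centrality of $\gamma$, whereas you give the short symmetry argument directly; both are fine). Where you diverge is the endgame: the paper packages the identity $\Theta_g(v_1,\dots,v_d)=\Theta_f(\gamma v_1,v_2,\dots,v_d)$ as a symmetric equivalence $(\gamma,\operatorname{Id},\dots,\operatorname{Id})$ between $\Theta_f$ and $\Theta_g$ and then invokes its general Theorem \ref{sm}(3), which in turn runs through the decomposition into indecomposable summands, Proposition \ref{sc}, Proposition \ref{pi} and the Hensel-type Lemma \ref{dth}. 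You instead extract a $d$-th root of $\gamma$ directly inside the finite commutative algebra $\k[\gamma]\subseteq Z(V,\Theta_f)$---a product of local Artinian $\k$-algebras with residue field $\k$---and redistribute it over the $d$ slots using centrality. This is a genuine shortcut: because the equivalence twists only one slot and the twisting element is already central, no decomposition into indecomposables is needed, and the whole argument becomes self-contained modulo Theorem \ref{cd}(1). The paper's route buys the general statement that symmetric equivalence implies isomorphism over an algebraically closed field, which it wants for its other corollaries; yours is leaner for this particular theorem. One small caution: in characteristic $p>d$ the ``terminating binomial series'' for a $d$-th root of $1+m$ requires the $p$-integrality of the coefficients $\binom{1/d}{k}$ for $k$ up to the nilpotency index of $m$ (or one can simply quote Hensel's lemma, as Lemma \ref{dth} does); this is standard but deserves a word.
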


We conclude the introduction with a brief outline of the paper. In Section 2 we investigate the algebraic structure of centers of multilinear forms with applications to their direct sum decompositions. In Section 3 the theory of centers is extended to multilinear maps and is applied to their symmetric equivalence. Throughout we assume that $d>2$ is an integer, $\k$ is a field of characteristic $0$ or $>d.$ The results are presented in terms of multilinear forms. We leave the equivalent version for tensors to the interested reader.

\section{Centers and direct sum decompositions of multilinear forms}
In this section, we consider the center algebras of multilinear forms with applications to direct sum decompositions. The main aim is to prove Theorems \ref{cd} and \ref{cod}.

First of all, we recall some concepts.
\begin{definition}
Let $(V,\Theta)$ be a $d$-linear form.
If there exist nonzero subspaces $V_1,\cdots, V_s \ (s\geq 2)$  of $(V,\Theta)$  such that $V=V_1 \oplus \cdots \oplus V_s$  and  $\Theta(v_1, \cdots, v_d)=0$ unless all the $v_i$'s are in the same $V_k$ for some $k$, then $\Theta$ is  called the (inner) direct sum of $\Theta_1,\cdots, \Theta_s$, where $\Theta_i=\Theta_{|V_i}$  is the restriction of $\Theta$ to $V_i$  for $1\leq i\leq s$ and we denote it by $(V,\Theta)=(V_1,\Theta_1) \oplus \cdots \oplus (V_s,\Theta_s)$. We call $(V,\Theta)$ decomposable if it is a direct sum. Otherwise, we call $(V,\Theta)$ indecomposable.
\end{definition}

Similar to the symmetric case \cite{h1} there is no harm in assuming that the $d$-linear form $(V,\Theta)$ is nondegenerate, that is $u=0$ is the only solution to the following linear equations
\begin{equation}\label{rad}
\Theta(u,v_1, \cdots, v_{d-1})=\Theta(v_1, u, \cdots, v_{d-1})=\cdots=\Theta(v_1, \cdots, v_{d-1}, u)=0, \quad \forall v_1, \cdots, v_{d-1} \in V.
\end{equation}
For an arbitrary $d$-linear form $(V,\Theta),$ let $V_0$ be the solution space of the previous equations \eqref{rad} and take a subspace $V_1$ of $V$ such that $V=V_1 \oplus V_0,$ then $(V,\Theta)=(V_1, \Theta_1) \oplus (V_0, \Theta_0).$ In particular, $(V_1, \Theta_1)$ is nondegenerate and $(V_0, \Theta_0)$ is a zero form. Note that $V_0$ is uniquely determined by $\Theta,$ see also \cite{bs}.

Now we are ready to prove Theorem \ref{cd}.

\noindent{\bf Proof of Theorem \ref{cd}}
(1) Let us show  that $Z(V,\Theta)$ is a commutative subalgebra of $\End(V).$ It is obvious that $Z(V,\Theta)$ is closed under linear combinations. Choose two arbitrary $\phi, \psi \in Z(V,\Theta)$ and we want to  show $\phi\circ \psi \in Z(V,\Theta).$ According to the definition of centers, for all $v_1,\cdots,v_d\in V$  we have
\begin{eqnarray*}
\Theta(v_1,  \cdots, \phi\circ \psi (v_{i}), \cdots, v_{j}, \cdots v_d)
  &=& \Theta(v_1,  \cdots, \psi(v_{i}), \cdots, v_{j},\cdots, \phi( v_d))\\
  &=& \Theta(v_1, \cdots, v_i,\cdots, \psi(v_j),\cdots, \phi(v_d))  \\
  &=& \Theta(v_1, \cdots, v_i,\cdots, \phi\circ\psi(v_j),\cdots, v_d).
 \end{eqnarray*}
 Hence we have  $\phi\circ \psi \in Z(V,\Theta)$. Similarly we show the commutativity of $Z(V,\Theta).$
\begin{eqnarray*} 
\Theta(v_1,  \cdots, \phi\circ \psi (v_{i}), \cdots, v_{j}, \cdots v_d)
&=&\Theta(v_1,  \cdots, \psi(v_{i}), \cdots, \phi(v_{j}), \cdots v_d)\\
&=& \Theta(v_1, \cdots, v_i,\cdots, \phi(v_j),\cdots, \psi(v_d)) \\
&=&\Theta(v_1, \cdots, \phi(v_i),\cdots, v_j,\cdots, \psi(v_d))  \\
&=&\Theta(v_1,  \cdots, \psi\circ\phi(v_{i}), \cdots, v_{j}, \cdots v_d).
\end{eqnarray*}
We conclude that $ \Theta(v_1,  \cdots, [\phi\circ \psi-\psi\circ \phi](v_{i}), \cdots, v_{j}, \cdots v_d)=0, \forall v_1,\cdots, v_d\in V$. As $(V,\Theta)$ is nondegenerate, it follows that $\phi\circ \psi-\psi\circ \phi=0$, that is, $\phi\circ \psi=\psi\circ \phi$.

(2) Suppose there exists a direct sum decomposition $(V,\Theta)=(V_1,\Theta_1) \oplus \cdots \oplus (V_s,\Theta_s).$ For $1\leq i\leq s$, let $e_i:V \twoheadrightarrow V_i \hookrightarrow V$ be the composition of the canonical projection $V \twoheadrightarrow V_i$ and the embedding $V_i \hookrightarrow V$. Then it is obvious that $e_i^2=e_i, \ e_ie_j=0, \ \forall i \ne j,$ and by definition it is direct to verify that each $e_i \in Z(V,\Theta).$ In other words, $e_1, \dots, e_s$ are a complete set of orthogonal idempotents of $Z(V,\Theta).$

Conversely, suppose $e_1, \dots, e_s$ are a complete set of orthogonal idempotents of $Z(V,\Theta).$ Let $V_i=e_iV$ and $\Theta_i=\Theta_{|V_i}.$ Then it is not hard to verify that $(V_1,\Theta_1) \oplus \cdots \oplus (V_s,\Theta_s)$ is a direct sum decomposition of $(V,\Theta).$ Indeed, assume $v_1, \dots, v_s$ are taken from the subspaces $V_i$'s and $v_j \in V_j, \ v_k \in V_k$ with $j < k,$ then
\begin{eqnarray*}
&&\Theta(v_1, \cdots, v_j, \cdots, v_k, \cdots, v_s) \\
&=&\Theta(v_1, \cdots, e_jv_j, \cdots, e_kv_k, \cdots, v_s) \\
&=&\Theta(v_1, \cdots, v_j, \cdots, e_je_kv_k, \cdots, v_s) \\
&=&0.
\end{eqnarray*}

(3) It suffices to prove that $Z(V, \Theta)$ has a unique complete set of primitive orthogonal idempotents disregarding their order thanks to (2). Suppose $1=e_1+\cdots+e_s=f_1+\cdots+f_t$ where all $e_i$'s and $f_j$'s are primitive orthogonal idempotents. Then for any fixed $i,$ $e_i=e_i(f_1+\cdots+f_t)=e_if_1+\cdots+e_if_t.$ Since $(e_if_j)^2=e_i^2f_j^2=e_if_j$ and $e_i$ is primitive, $e_i=e_if_j$ for some certain $j.$ Similarly, $f_j=f_je_k$ for some certain $k.$ We claim that $i=k$ and so $e_i=f_j.$ Otherwise, if $i \ne k,$ then $e_i=e_if_j=e_if_je_k=e_ie_kf_j=0.$ This is absurd. Then we are done.

\begin{remarks} \label{indec} Keep the assumption that $(V, \Theta)$ is a nondegenerate $d$-linear form.
\begin{enumerate}
  \item $(V,\Theta)$ is indecomposable if and only if $Z(V,\Theta)$ is a local algebra.
  \item The uniqueness of direct sum decomposition of multilinear forms were dealt with by other approaches in \cite[Proposition 2.3]{h1} (the symmetric case) and \cite[Theorem 9]{bs}. The treatment via centers seems much more convenient.
  \item The algorithm of direct sum decomposition of symmetric multilinear forms proposed by the authors \cite[Algorithm 3.12]{hlyz2} can be extended verbatim to the present situation.
\end{enumerate}
\end{remarks}

Finally, we consider the algebraic structure of the center of a general $d$-linear form. This will provide important information for $d$-linear forms.  Our main concern is whether a general $d$-linear form is decomposable or not. It was already noticed in \cite[Remark 10]{bs} that multilinear forms are more likely indecomposable. We confirm this in terms of elementary algebraic geometry with a help of the center theory. It is shown that almost all multilinear forms have trivial center, namely the center is isomorphic to the ground field. Such multilinear forms are called central. Clearly, a central multilinear form is indecomposable by item (1) of Remarks \ref{indec}. The relevant result for symmetric multilinear forms was proved in \cite[Theorem 3.2]{hlyz2}, where the same idea can be extended to the present situation.

Explicit examples of central symmetric $d$-linear forms were constructed in \cite{hlyz2}. Here we construct some examples of central $d$-linear forms which are not symmetric. Assume $V$ is an $n$-dimensional $\k$-space with a basis $e_1,\cdots,e_n.$ Let $A=(a_{i_1 i_2 \cdots i_d})_{1 \le i_1, i_2, \dots, i_d \le n}$ be the associated tensor of $(V,\Theta)$ under the basis $e_1, e_2, \dots, e_n.$ Then  we have
\begin{eqnarray}\label{ec}
Z(V,\Theta) \cong \{X\in k^{n\times n } \mid X^T A_{i_1 \cdots \underline{i_k} \cdots \underline{i_l} \cdots i_d}=A_{i_1 \cdots \underline{i_k} \cdots \underline{i_l}\cdots i_d}X, \forall 1 \leq i_1 , \cdots,i_d \leq n\},
\end{eqnarray}
where  $A_{i_1 \cdots \underline{i_k} \cdots \underline{i_l}\cdots i_d}$ denotes the $n\times n$ matrix  $(a_{i_1 \cdots i_{k-1},i,i_{k+1},\cdots, i_{l-1},j,i_{l+1},\cdots i_d})_{1 \leq i,j \le n}.$

\begin{example}\label{s} We construct a $d$-linear form with trivial center for each $d\geq 3$ and $n\geq 2$.
	
	If $n=2$, let  $\left( a_{ij} \right)_{2\times 2}=\left(\begin{array}{cc}  1& -1\\ 1&0
	\end{array}\right)$. Let $\Theta$ be the $d$-linear form such that $a_{i_1i_2i_3\cdots i_d}=a_{i_1i_2}$ for all $1\leq i_1,\cdots, i_d\leq 2$.  An easy calculation shows that $Z(V,\Theta)\cong \k$.
	
	If $n\geq 3$, let $p_1,\cdots, p_n, q_1\cdots, q_n$ be $2n$ nonzero elements of $\k$ such that $\frac{p_j}{p_i}\neq \frac{q_j}{q_i}$ whenever $i \neq j$.	Let $A_1=(a^{(1)}_{ij}) ($resp. $A_2=(a^{(2)}_{ij}) )$ be the diagonal $n\times n$ matrix with the $a^{(1)}_{ii}=p_i$ $($resp. $a^{(2)}_{ii}=q_i) $ for $1\leq i \leq  n$. Let $A_3=(a^{(3)}_{ij})$ be the matrix with  $a^{(3)}_{ij}=1$ for all $1\leq i,j\leq n$. Let  $\Theta$ be the $d$-linear form such that $a_{1i_2i_3\cdots i_d}=a^{(1)}_{i_2i_3}, a_{2i_2i_3\cdots i_d}=a^{(2)}_{i_2i_3},a_{3i_2i_3\cdots i_d}=a^{(3)}_{i_2i_3}, a_{ii_2i_3\cdots i_d}=0$ for all $4\leq i\leq n, 1\leq i_2,i_3,\cdots, i_d\leq n$.
	Suppose $X=(x_{ij})\in Z(V,\Theta)$, then we have $X^TA_{i}=A_{i}X$ for $i=1,2,3$. Consequently, we have $p_ix_{ij}=p_jx_{ji}$ and $q_ix_{ij}=q_jx_{ji}$ for all $1\leq i,j\leq n$. As $\frac{p_j}{p_i}\neq \frac{q_j}{q_i}$ whenever $i \neq j$, we conclude that $X$ must be a diagonal matrix.  Moreover $X$ must be a mutilple of identity matrix since $X^TA_3=A_3X$.	Therefore we have $Z(V,\Theta)\cong \k$.
\end{example}

\noindent{\bf Proof of Theorem \ref{cod}}
Let $C$ be the set of all central $d$-linear forms. Clearly, $C$ is not empty by Example \ref{s}. See also \cite{hlyz2} for many other examples of central $d$-linear forms which are symmetric. The center of  $(V,\Theta)$  is  the solution space  to a system of  linear equations on $x_{ij}$'s: $X^TA_{i_1 \cdots \underline{i_k} \cdots \underline{i_l}\cdots i_d}=A_{i_1 \cdots \underline{i_k} \cdots \underline{i_l}\cdots i_d}X$  for all possible  index $i_1\cdots i_d$, where we use the same notations as in Equation \eqref{ec}. The $d$-linear form $(V,\Theta)$ is central if and only if the rank of the coefficient matrix, denoted by $B,$ of the linear system \eqref{ec} is equal to $n^2-1$. Hence $C$ is the union of all the principal open sets defined by the $(n^2-1)$-minors of $B$ regarding all $a_{i_1\cdots i_d}$'s as indeterminates. Consequently, $C$ is a nonempty Zariski open set of $T_{d,n}$ and so is dense.

\section{Centers and symmetric equivalence of $d$-linear maps}
This section is motivated by \cite{bfms, bs}. We extend the theory of centers to multilinear maps and apply it to the problems of direct sum decomposition and symmetric equivalence.

We begin by recalling some notions introduced in \cite{bfms, bs}.

\begin{definition}\label{mm}
Let $V$ and $T$ be two finite dimensional vector spaces over $\k$.
\begin{enumerate}
\item A $d$-linear map on $V$ with target $T$ is a multilinear map $\Theta:V^d\to T$ and is denoted by $(V,T, \Theta)$ or $\Theta$ for short.
\item The $d$-linear map $(V, T, \Theta)$ is called nondegenerate if $u=0$ is the only solution to the linear equations $\Theta(u,v_1, \cdots, v_{d-1})=\Theta(v_1, u, \cdots, v_{d-1})=\cdots=\Theta(v_1, \cdots, v_{d-1}, u)=0$ for all  $v_1, \cdots, v_{d-1} \in V.$
\item Suppose there exist nonzero subspaces $V_1$ and $V_2$ such that $V=V_1\oplus V_2$ and $\Theta(v_1'+v_1'',\cdots,v_d'+v_d'')=\Theta(v_1',\cdots,v_d')+\Theta(v_1'',\cdots,v_d'')$ for all $v_1',\cdots,v_d'\in V_1$ and $v_1'',\cdots,v_d''\in V_2$, then $\Theta$ is called the (inner) direct sum  of $\Theta_{|V_1}$ and $\Theta_{|V_2}$. If this is the case, then $(V,T, \Theta)$  is called decomposable. Otherwise, $(V,T, \Theta)$  is called indecomposable.
\end{enumerate}
\end{definition}

\begin{definition}\label{mm2}
Let $(U,S,\Delta)$ and $(V,T,\Theta)$ be two $d$-linear maps.
\begin{enumerate}
\item $(U,S,\Delta)$ and $(V,T,\Theta)$ are called symmetrically equivalent, denoted by $(U,S,\Delta) \simeq_s (V,T,\Theta),$ if there exist linear bijections $\phi_1,\cdots, \phi_d: U\to V$ and $\psi: S \to T$
	such that $$\psi \Delta(u_1,\cdots, u_d)=\Theta(\phi_{\sigma_1}(u_1),\cdots,\phi_{\sigma_d}(u_d))$$ for all $u_1,\cdots, u_d\in U$ and each reordering $\sigma_1,\cdots,\sigma_d$ of $1,\cdots,d$.
\item  $(U,S,\Delta)$ and $(V,T,\Theta)$ are called isomorphic, denoted by $(U,S,\Delta) \cong (V,T,\Theta),$ if there exist linear bijections $\phi: U\to V$ and $\psi:S\to T $ such that $$\psi \Delta(u_1,\cdots, u_d)= \Theta(\phi(u_1),\cdots,\phi(u_d))$$ for all $u_1,\cdots, u_d\in U$.
\item  Suppose  $S=T$. The $($outer$)$ direct sum of $(U,T,\Delta)$ and $(V,T,\Theta)$ is the multilinear map $\Delta \oplus \Theta: (U \oplus V)^d \to   T$ defined by \[ \Delta   \oplus \Theta (u_1+v_1, \dots, u_d+v_d) = \Delta(u_1, \dots, u_d) + \Theta(v_1, \dots, v_d) \] for all $u_1, \dots, u_d \in U$ and $v_1, \dots, v_d \in V.$
\end{enumerate}
\end{definition}

The notion of centers for multilinear forms can be naturally extended to multilinear maps.
\begin{definition}\label{cmm}	
For a $d$-linear map $(V, T, \Theta)$, its center is defined as
	\begin{equation}
Z(V,T,\Theta) \colon = \left\{ \phi \in \operatorname{End}(V) \left|
\begin{array}{c}
  \Theta(v_1,  \cdots, \phi(v_{i}), \cdots, v_{j}, \cdots v_d) \\
  =\Theta(v_1, \cdots, v_i,\cdots, \phi(v_j),\cdots, v_d), \\
  \forall 1\leq i,j\leq d, \ \ \forall v_1, \cdots, v_d \in V
\end{array}
\right. \right\}. \end{equation}
If $Z(V,T,\Theta) \cong \k,$ then we say the $d$-linear map $(V, T, \Theta)$ is central.
\end{definition}

It is not hard to see that Theorems \ref{cd} and \ref{cod}, as well as their proofs, can be generalized almost verbatim to multilinear maps.

\begin{theorem} \label{mcd}
Let $M_{V,d,T}$ denote the affine space of $d$-linear maps on $V$ with target $T$ and  $C_{V,d,T} \subset  M_{V,d,T}$ the subset of central $d$-linear maps.
Suppose $(V,T,\Theta) \in M_{V,d,T}$ is  nondegenerate.
\begin{itemize}
\item[(1)] The center $Z(V,T,\Theta)$ is a commutative algebra.
\item[(2)] There is a one-to-one correspondence between direct sum decompositions of $(V,T,\Theta)$ and complete sets of orthogonal idempotents of $Z(V,T,\Theta).$
\item[(3)] The decomposition of $(V,T,\Theta)$ into a direct sum of indecomposable $d$-linear maps is unique up to permutation of indecomposable direct summands.
\item[(4)] $C_{V,d,T}$ is a Zariski open and dense subset of $M_{V,d,T}.$
\end{itemize}
\end{theorem}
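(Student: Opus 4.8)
The plan is to observe that Theorem~\ref{mcd} is essentially the ``multilinear map'' analogue of Theorems~\ref{cd} and~\ref{cod}, and that each of the four assertions follows by repeating, almost verbatim, the argument already given for multilinear forms, with $\k$ replaced by $T$ in the codomain. The only place where the target space genuinely enters is in the \emph{nondegeneracy} hypothesis; everywhere else the arguments manipulate the common slot $v_i \mapsto \phi(v_i)$ and never touch the output, so no change is needed.

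First I would prove (1). As in the proof of Theorem~\ref{cd}(1), $Z(V,T,\Theta)$ is visibly closed under linear combinations. For closure under composition and for commutativity, one runs the same chains of equalities: for $\phi,\psi \in Z(V,T,\Theta)$ and all $v_1,\dots,v_d$,
\[
\Theta(v_1,\cdots,\phi\circ\psi(v_i),\cdots,v_j,\cdots,v_d)
=\Theta(v_1,\cdots,v_i,\cdots,\phi\circ\psi(v_j),\cdots,v_d),
\]
by sliding $\psi$ then $\phi$ from slot $i$ to slots $j$ and $d$ and back, exactly as before; similarly $\Theta(v_1,\cdots,[\phi\psi-\psi\phi](v_i),\cdots,v_d)=0$ for all arguments. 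Here the values lie in $T$ rather than in $\k$, but this is irrelevant: the identities hold slot-by-slot. Invoking nondegeneracy of $(V,T,\Theta)$ (Definition~\ref{mm}(2)) in place of nondegeneracy of the form then forces $\phi\psi=\psi\phi$ and $\phi\circ\psi\in Z(V,T,\Theta)$.

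Next, (2) and (3) are purely formal consequences of (1) together with the bijective dictionary between decompositions and idempotents. For (2): given a direct sum decomposition $V=V_1\oplus\cdots\oplus V_s$ in the sense of Definition~\ref{mm}(3), the coordinate projections $e_i$ satisfy $e_i^2=e_i$, $e_ie_j=0$, $\sum e_i=1$, and lie in $Z(V,T,\Theta)$; conversely a complete set of orthogonal idempotents of $Z(V,T,\Theta)$ yields $V_i=e_iV$ with $\Theta$ vanishing on mixed arguments by the same two-line computation $\Theta(\dots,e_jv_j,\dots,e_kv_k,\dots)=\Theta(\dots,v_j,\dots,e_je_kv_k,\dots)=0$. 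One should check that Definition~\ref{mm}(3)'s additivity condition is indeed equivalent to this vanishing-on-mixed-slots condition, which is the routine multilinear expansion. For (3), the idempotent-refinement argument in the proof of Theorem~\ref{cd}(3) uses only commutativity of the center and primitivity, so it transfers unchanged.

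Finally, (4) repeats the proof of Theorem~\ref{cod}. Fixing a basis of $V$ and of $T$, the center of $(V,T,\Theta)$ is the solution space of a linear system in the entries $x_{ij}$ of $X$, whose coefficients are linear (in fact, polynomial) in the structure constants $a_{i_1\cdots i_d}^{(\ell)}$ of $\Theta$ — one gets one block of equations $X^TA = AX$ per pair of ``frozen'' slots and per basis vector of $T$. The map is central precisely when the rank of the combined coefficient matrix $B$ equals $n^2-1$, a condition cut out by the non-vanishing of some $(n^2-1)$-minor of $B$; hence $C_{V,d,T}$ is a union of principal open sets, so Zariski open, and it is nonempty since one can build a central example (e.g., take a central $d$-linear form as in Example~\ref{s} composed with any nonzero functional $\k\to$ a chosen line in $T$, which clearly has the same center), whence dense in the irreducible affine space $M_{V,d,T}$. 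The one point deserving a word of care — the main (minor) obstacle — is verifying that passing from forms to $T$-valued maps does not enlarge the center in a way that breaks nondegeneracy's role: concretely, one must note that Definition~\ref{mm}(2) is exactly what is needed to conclude ``$\Theta(v_1,\cdots,w(v_i),\cdots,v_d)=0$ for all arguments $\Rightarrow w=0$'', which is immediate from the definition but must be stated. Everything else is bookkeeping.
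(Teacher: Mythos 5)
Your proposal is correct and matches the paper's approach: the paper itself offers no separate proof of Theorem~\ref{mcd}, asserting only that Theorems~\ref{cd} and~\ref{cod} ``can be generalized almost verbatim to multilinear maps,'' and your argument carries out exactly that generalization. The two points you single out for care --- that nondegeneracy in the sense of Definition~\ref{mm}(2) still forces $w=0$ from $\Theta(v_1,\cdots,w(v_i),\cdots,v_d)=0$ when the values lie in $T$, and that the additivity condition of Definition~\ref{mm}(3) is equivalent to vanishing on mixed slots --- are precisely the (routine) checks the paper leaves implicit.
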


Now we investigate symmetric equivalence of $d$-linear maps via their centers.

\begin{proposition}\label{sc}
Suppose $(U,S,\Delta)$ and $(V,T,\Theta)$  are nondegenerate $d$-linear maps. If $(U,S,\Delta) \simeq_s (V,T,\Theta),$ then $Z(U,S,\Delta)$ is isomorphic to  $Z(V,T,\Theta).$
\end{proposition}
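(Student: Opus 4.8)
The plan is to show that a symmetric equivalence between $(U,S,\Delta)$ and $(V,T,\Theta)$ transports centers in the expected way, by conjugation by any one of the participating bijections $\phi_k$. Let me set up notation: a symmetric equivalence provides linear bijections $\phi_1,\dots,\phi_d\colon U\to V$ and $\psi\colon S\to T$ with $\psi\Delta(u_1,\dots,u_d)=\Theta(\phi_{\sigma_1}(u_1),\dots,\phi_{\sigma_d}(u_d))$ for \emph{every} permutation $\sigma$ of $1,\dots,d$. The first key observation is that, comparing the identity for two permutations that differ by a transposition, one gets for all $u_1,\dots,u_d\in U$ and all $i\ne j$,
\[
\Theta(\phi_1(u_1),\dots,\phi_i(u_i),\dots,\phi_j(u_j),\dots,\phi_d(u_d))
=\Theta(\phi_1(u_1),\dots,\phi_j(u_i),\dots,\phi_i(u_j),\dots,\phi_d(u_d)),
\]
i.e. the $\phi_k$'s may be interchanged freely in the slots of $\Theta$ without changing the value. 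Since the $\phi_k$ are bijections, replacing $u_k$ by $\phi_k^{-1}(v_k)$ for $k\ne 1$ and writing $\lambda_k:=\phi_k\phi_1^{-1}\in\End(V)$, this says $\lambda_k\in Z(V,T,\Theta)$ for each $k$ (using nondegeneracy of $\Theta$ to promote the "one slot" identity to the full center condition — exactly the argument already used in the proof of Theorem \ref{cd}).

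Next I would prove the transport statement itself: the map $\Psi\colon\End(U)\to\End(V)$, $\Psi(\alpha)=\phi_1\alpha\phi_1^{-1}$, restricts to an algebra isomorphism $Z(U,S,\Delta)\xrightarrow{\ \sim\ }Z(V,T,\Theta)$. That $\Psi$ is an algebra isomorphism of $\End(U)$ onto $\End(V)$ is clear; the content is that it matches the two centers. Take $\alpha\in Z(U,S,\Delta)$ and $v_1,\dots,v_d\in V$; write $u_k=\phi_k^{-1}(v_k)$. Using the free-interchange property above to line up the slots, the centrality identity for $\Delta$ at slots $i,j$, and the defining relation $\psi\Delta(\cdots)=\Theta(\phi_{\bullet}(\cdots))$ (with $\psi$ a bijection), one computes that $\phi_i\alpha\phi_i^{-1}$ satisfies the center condition for $\Theta$ in slots $i,j$; then, since $\phi_i\alpha\phi_i^{-1}=\lambda_i\,(\phi_1\alpha\phi_1^{-1})\,\lambda_i^{-1}$ and $\lambda_i\in Z(V,T,\Theta)$ which is commutative by Theorem \ref{mcd}(1), we get $\phi_i\alpha\phi_i^{-1}=\phi_1\alpha\phi_1^{-1}=\Psi(\alpha)$, so $\Psi(\alpha)\in Z(V,T,\Theta)$. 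Applying the same reasoning to the inverse symmetric equivalence (bijections $\phi_k^{-1}$ and $\psi^{-1}$) shows $\Psi^{-1}$ maps $Z(V,T,\Theta)$ into $Z(U,S,\Delta)$, so $\Psi$ restricts to a bijection between the centers, hence an algebra isomorphism.

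The main obstacle is purely bookkeeping: keeping the slot indices straight when several distinct bijections $\phi_k$ appear in different arguments of $\Theta$, and being careful that the "free interchange of the $\phi_k$'s" derived from the permutation hypothesis is applied only in the combinations that are actually licensed. The one genuinely necessary ingredient beyond formal manipulation is nondegeneracy of $\Theta$ (to conclude $\lambda_k\in Z(V,T,\Theta)$ and to pass from a single-slot identity to membership in the center) together with commutativity of $Z(V,T,\Theta)$ from Theorem \ref{mcd}(1), which is what collapses the \emph{a priori} $\phi_i$-dependent conjugates into the single well-defined map $\Psi$. Once these are in place the rest is routine, and the symmetry of the whole setup under $(\Delta,\phi_k,\psi)\leftrightarrow(\Theta,\phi_k^{-1},\psi^{-1})$ gives the two-sided inverse for free.
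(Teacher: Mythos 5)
Your overall strategy is the same as the paper's: first show that $\phi_k\phi_l^{-1}\in Z(V,T,\Theta)$ (and its mirror $\phi_k^{-1}\phi_l\in Z(U,S,\Delta)$), then transport the center by conjugation by $\phi_1$. However, the step in which you collapse the various conjugates into a single map is circular as written. From $\phi_i\alpha\phi_i^{-1}=\lambda_i\bigl(\phi_1\alpha\phi_1^{-1}\bigr)\lambda_i^{-1}$ with $\lambda_i=\phi_i\phi_1^{-1}\in Z(V,T,\Theta)$, the commutativity of $Z(V,T,\Theta)$ lets you cancel the conjugation only if $\phi_1\alpha\phi_1^{-1}$ is already known to commute with $\lambda_i$; commutativity of a subalgebra says nothing about conjugating an element not yet known to lie in it, and $\phi_1\alpha\phi_1^{-1}\in Z(V,T,\Theta)$ is precisely the conclusion you are after. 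Relatedly, the computation in your intermediate step does not actually give the center condition for $\phi_i\alpha\phi_i^{-1}$ alone: pushing $\alpha$ through $\Delta$ and back yields the mixed identity
\[
\Theta\bigl(v_1,\dots,\phi_i\alpha\phi_i^{-1}(v_i),\dots,v_j,\dots,v_d\bigr)=\Theta\bigl(v_1,\dots,v_i,\dots,\phi_j\alpha\phi_j^{-1}(v_j),\dots,v_d\bigr),
\]
with a \emph{different} conjugate in each slot, so by itself it does not exhibit any single endomorphism in $Z(V,T,\Theta)$.

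Both defects are repaired by applying commutativity on the $U$-side rather than the $V$-side. Since $\phi_1^{-1}\phi_i\in Z(U,S,\Delta)$ (the mirror of your $\lambda_i$, available from the same free-interchange argument) and $\alpha\in Z(U,S,\Delta)$, these two elements commute because $Z(U,S,\Delta)$ is commutative ($\Delta$ is nondegenerate, so Theorem \ref{mcd}(1) applies); hence
$\phi_i\alpha\phi_i^{-1}=\phi_1(\phi_1^{-1}\phi_i)\,\alpha\,(\phi_1^{-1}\phi_i)^{-1}\phi_1^{-1}=\phi_1\alpha\phi_1^{-1}$,
and the mixed identity above then becomes the genuine center condition for $\Psi(\alpha)=\phi_1\alpha\phi_1^{-1}$. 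This is in substance what the paper does (it derives $\phi_k\alpha\phi_k^{-1}=\phi_l\alpha\phi_l^{-1}$ from the commutativity of $Z(U,S,\Delta)$ together with the nondegeneracy of $\Theta$, and only afterwards verifies membership in $Z(V,T,\Theta)$). With this correction the remainder of your argument, including the use of the inverse equivalence to produce $\Psi^{-1}$, goes through and coincides with the paper's proof.
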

\begin{proof}
Let $\psi:S\to T$ and $\phi_1,\cdots, \phi_d: U\to V$ be the  linear bijections such that  $\psi \Delta(u_1,\cdots, u_d)=
  \Theta(\phi_{\sigma_1}(u_1),\cdots,\phi_{\sigma_d}(u_d))$ for all $u_1,\cdots, u_d\in U$ and each reordering $\sigma_1,\cdots,\sigma_d$ of $1,\cdots,d$. In particular,  for fixed  bijections $\phi_k$ and $\phi_l$  we have
  \begin{eqnarray*}
 &&  \Delta(u_1,\cdots, \phi_k^{-1}\phi_l(u_i),\cdots,u_j,\cdots, u_d)\\
  &=&\psi^{-1}\Theta(\phi_{\sigma_1}(u_1),\cdots,\phi_k(\phi^{-1}_k\phi_l(u_i)),\cdots,\phi_l(u_j),\cdots,\ \phi_{\sigma_d}(u_d))\ \  (\phi_{\sigma_i}=\phi_k,\phi_{\sigma_j}=\phi_l)
  \\
  &=&\psi^{-1}\Theta(\phi_{\sigma_1}(u_1),\cdots,\phi_l(u_i),\cdots,\phi_l(u_j),\cdots,\phi_{\sigma_d}(u_d))\\
  &=&\psi^{-1}\Theta(\phi_{\sigma_1}(u_1),\cdots,\phi_l(u_i),\cdots,\phi_k(\phi_k^{-1}\phi_l(u_j)),\cdots,\phi_{\sigma_d}(u_d))\\
  &=& \Delta(u_1,\cdots,u_i,\cdots,\phi^{-1}_k\phi_l(u_j),\cdots, u_d)  \ \ \ \ 
  \end{eqnarray*}
  Therefore we show that $\phi_k^{-1}\phi_l\in Z(U,S,\Delta)$ for all possible pairs $(k,l)$.
   Similarly, we can show that $\phi_k \phi_l^{-1}\in Z(V,T,\Theta)$ for all possible pairs $(k,l)$.

  For any $\phi\in Z(U,S,\Delta)$, let us show that  $\phi_k\phi \phi^{-1}_k=\phi_l\phi \phi^{-1}_l$ for all possible pairs $(k,l)$  and $\phi_k \phi\phi^{-1}_k \in Z(V,T,\Theta)$  as follows. \begin{eqnarray*}
  &&\Theta(v_1,\cdots,\phi_k\phi\phi_k^{-1}(v_i),\cdots,v_j,\cdots,v_d)\\
  &=&\psi \Delta(\phi^{-1}_{\sigma_1}(v_1),\cdots,\phi^{-1}_{l}\phi_k\phi\phi^{-1}_k(v_i),\cdots,\phi^{-1}_k(v_j),\cdots,\phi^{-1}_{\sigma_d}(v_d)) \ \ (\phi^{-1}_{\sigma_i}=\phi^{-1}_l,\phi^{-1}_{\sigma_j}=\phi^{-1}_k)\\
  &=&\psi \Delta(\phi^{-1}_{\sigma_1}(v_1),\cdots,\phi\phi^{-1}_{l}\phi_k\phi^{-1}_k(v_i),\cdots,\phi^{-1}_k(v_j),\cdots,\phi^{-1}_{\sigma_d}(v_d)) \  (\mathrm{ the \ commutativity \ of} Z(U,S,\Delta))\\
  &=&\psi \Delta(\phi^{-1}_{\sigma_1}(v_1),\cdots,\phi\phi^{-1}_{l}(v_i),\cdots,\phi^{-1}_k(v_j),\cdots,\phi^{-1}_{\sigma_d}(v_d))\\
 &= &\Theta(v_1,\cdots,\phi_l \phi \phi_l^{-1}(v_i),\cdots,v_j,\cdots,v_d).
 \end{eqnarray*}
 As $\Theta$ is nondegenerate, we have $\phi_k\phi\phi_k^{-1}=\phi_l\phi\phi^{-1}_l$ for all possible pairs $(k,l)$.

Similarly, we also have
\begin{eqnarray*}
&&\Theta(v_1,\cdots,\phi_k\phi\phi_k^{-1}(v_i),\cdots,v_j,\cdots,v_d)\\
  &=&\psi\Delta(\phi^{-1}_{\sigma_1}(v_1),\cdots,\phi^{-1}_{k}\phi_k\phi\phi^{-1}_k(v_i),\cdots,\phi^{-1}_l(v_j),\cdots,\phi^{-1}_{\sigma_d}(v_d))  \ \  (\phi^{-1}_{\sigma_i}=\phi^{-1}_k,\phi^{-1}_{\sigma_j}=\phi^{-1}_l)\\
  &=&\psi \Delta(\phi^{-1}_{\sigma_1}(v_1),\cdots,\phi\phi^{-1}_k(v_i),\cdots,\phi^{-1}_l(v_j),\cdots,\phi^{-1}_{\sigma_d}(v_d)) \\
  &=&\psi \Delta(\phi^{-1}_{\sigma_1}(v_1),\cdots,\phi^{-1}_k(v_i),\cdots,\phi\phi^{-1}_l(v_j),\cdots,\phi^{-1}_{\sigma_d}(v_d)) \\
  &=& \Theta(v_1,\cdots,v_i,\cdots,\phi_l\phi\phi_l^{-1}(v_j),\cdots,v_d)\\
  &=&\Theta(v_1,\cdots,v_i,\cdots,\phi_k\phi\phi_k^{-1}(v_j),\cdots,v_d).
\end{eqnarray*}
   Therefore we conclude  $\phi_k\phi\phi_k^{-1}\in Z(V,T,\Theta)$ for all $1\leq k\leq d$.
 Finally, we can  construct  the isomorphism $\Psi:Z(U,S,\Delta) \to Z(V,T,\Theta)$ by $  \Psi(\phi)=\phi_1 \phi \phi_1^{-1}$.
\end{proof}

Before stating the main results on symmetric equivalence of multilinear forms, we need some technical preparations particularly in commutative algebra, see for instance \cite{am,e}. A multilinear map $(V,T,\Theta)$ over $\k$ is called absolutely indecomposable if it remains indecomposable after any field extension of $\k.$ A central multilinear map is obviously absolutely indecomposable.

\begin{lemma}\label{dth}
Suppose the characteristic of $\k$ is zero or coprime to $d$. Let $A$ be a   commutative finite local $\k$-algebra with the maximal ideal $m$. Let $K=A/m$ be its residue field. Then we have $A^{\times}/(A^{\times})^d\cong K^{\times}/(K^{\times})^d$, where $A^{\times}$ (resp. $K^{\times}$) is the group of units of $A$ (resp. $K$). Moreover, if $K/\k$ is purely inseparable, then $K^{\times}/(K^{\times})^d\cong \k^{\times}/(\k^{\times})^d$.
\end{lemma}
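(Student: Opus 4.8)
The plan is to prove the two isomorphisms separately, beginning with $A^{\times}/(A^{\times})^d \cong K^{\times}/(K^{\times})^d$. First I would consider the natural surjection $\pi \colon A^{\times} \to K^{\times}$ induced by the quotient map $A \to A/m = K$, which is visibly a surjective group homomorphism; its kernel is $1+m$, the group of principal units. Passing to $d$-th powers, it suffices to show that raising to the $d$-th power is a \emph{bijection} on $1+m$, for then a routine diagram chase (the snake lemma applied to the multiplication-by-$d$ maps on the short exact sequence $1 \to 1+m \to A^{\times} \to K^{\times} \to 1$) forces $A^{\times}/(A^{\times})^d \xrightarrow{\sim} K^{\times}/(K^{\times})^d$.

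The key step is therefore: \textbf{the map $x \mapsto x^d$ is bijective on $1+m$} when $\operatorname{char}\k$ is $0$ or coprime to $d$. Since $A$ is finite-dimensional, $m$ is nilpotent, say $m^N = 0$, and $1+m$ is a finite iterated extension of the additive groups $m^i/m^{i+1}$. I would argue by induction on the nilpotency index, or more slickly observe that the $d$-th power map on $1+m$ is given by the polynomial $x^d = (1+t)^d = 1 + dt + \binom{d}{2}t^2 + \cdots$ in $t = x-1 \in m$; because $d$ is invertible in $\k$ (hence in $A$) and $t$ is nilpotent, the formal series for $(1+t)^{1/d} = 1 + \tfrac{1}{d}t + \cdots$ terminates and gives a genuine two-sided inverse. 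Thus $d$-th powering is bijective on $1+m$, which completes the first isomorphism.

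For the second assertion, suppose $K/\k$ is purely inseparable. Then $\operatorname{char}\k = p > 0$ and every element of $K$ satisfies $x^{p^e} \in \k$ for some $e$; in particular $[K:\k]$ is a power of $p$. Here the hypothesis that $d$ is coprime to $p$ is what I would exploit: I claim the inclusion $\k^{\times} \hookrightarrow K^{\times}$ induces an isomorphism on $d$-th power quotients. Surjectivity of $\k^{\times}/(\k^{\times})^d \to K^{\times}/(K^{\times})^d$: given $u \in K^{\times}$, choose $e$ with $u^{p^e} \in \k^{\times}$; since $\gcd(p^e, d) = 1$, write $1 = a p^e + b d$, so $u = (u^{p^e})^a (u^d)^b$, exhibiting $u$ modulo $(K^{\times})^d$ as the image of $(u^{p^e})^a \in \k^{\times}$. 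Injectivity: if $v \in \k^{\times}$ and $v = w^d$ for some $w \in K^{\times}$, then $w^{p^e} \in \k^{\times}$ for suitable $e$ and, again using $ap^e + bd = 1$, one gets $w = (w^{p^e})^a (w^d)^b = (w^{p^e})^a v^b \in \k^{\times}$, so $v$ is already a $d$-th power in $\k^{\times}$.

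Chaining the two isomorphisms yields the lemma. I expect the \textbf{main obstacle} to be the bijectivity of $d$-th powering on $1+m$: one must be careful that the binomial-series inverse actually terminates (using nilpotence of $m$) and that $d$ is genuinely a unit in $A$, which rests on the characteristic hypothesis — this is precisely where ``characteristic zero or coprime to $d$'' is used, and the same hypothesis reappears, in the guise $\gcd(p^e,d)=1$, in the purely inseparable step.
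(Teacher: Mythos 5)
Your proof is correct and follows the same overall architecture as the paper's: the short exact sequence $1 \to 1+m \to A^{\times} \to K^{\times} \to 1$, the observation that every element of $1+m$ is a $d$-th power so that the $d$-th power quotients of $A^{\times}$ and $K^{\times}$ agree, and the B\'ezout identity $ap^e+bd=1$ for the purely inseparable step. Two local differences are worth recording. For the $d$-th power map on $1+m$, the paper simply invokes Hensel's lemma for $X^d-a$; your binomial-series inverse is equivalent, but if you take that route you should justify that the coefficients $\binom{1/d}{k}$ lie in $\mathbb{Z}[1/d]$ (clear in characteristic $0$, true but not immediate when $\operatorname{char}\k=p$ is merely coprime to $d$ and $p\le k$) — your fallback of inducting along the nilpotency filtration, i.e.\ Newton/Hensel iteration, avoids this entirely and uses only that $d$ is a unit in $A$. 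For injectivity of $\k^{\times}/(\k^{\times})^d \to K^{\times}/(K^{\times})^d$, the paper argues that a $d$-th root $u$ of $c\in\k^{\times}$ generates a separable subextension $\k(u)$ of the purely inseparable extension $K/\k$, forcing $u\in\k$; your second application of the B\'ezout identity, writing $w=(w^{p^e})^a v^b\in\k^{\times}$, reaches the same conclusion more elementarily, without appealing to separability. Both routes establish the lemma.
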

\begin{proof}
	As $A$ is local, we have an exact sequence
$ \xymatrix@C=0.5cm{
  1 \ar[r] & 1+m \ar[rr]^{} && A^{\times} \ar[rr]^{} && K^{\times} \ar[r] & 1 }$. After tensor with $\mathbb{Z}/d\mathbb{Z}$, we obtain the exact sequence
  $$\xymatrix@C=0.5cm{
  1+m/(1+m)^d\ar[rr]^{} &&A^{\times}/(A^{\times})^d  \ar[rr]^{} && K^{\times}/(K^{\times})^d \ar[r] & 1 }.$$
	Since  the characteristic of $k$ is zero or coprime to $d$, for each $a\in 1+m$  the equation $X^d-a=0$ always has a solution in $A$ by Hensel Lemma \cite{e}. Therefore  each element of $1+m$ is a $d$-th power,  and  we have $A^{\times}/(A^{\times})^d\cong (K)^{\times}/(K^{\times})^d$.

The canonical morphism $\k \rightarrow A \rightarrow K$ induces the map $\phi: \k^{\times}/(\k^{\times})^d \to K^{\times}/(K^{\times})^d $. If $K/\k$ is  purely inseparable, we first show that $\phi$ is surjective. Let $p=\mathrm{char} \k$, then  for each $a \in K$,  there exits a $p$-th power  $p^n$ such that $b=a^{p^n}\in \k$ as $K/\k$ is purely separable.  As $(p,d)=1$, there exists two integers $\alpha,\beta$ such that $p^n\alpha+d\beta=1$. Therefore, $a=a^{p^n\alpha+d\beta}=b^{\alpha} a^{d\beta}\in \k^{\times}(K^{\times})^d$ and $\phi$ is surjective. Next if there exits an element $c\in \k$ such that $c=u^d$ for some $u\in K$. Then $u$ is a root of the separable polynomial $T^d-c$ and $\k(u)/\k$ is a separable subextension of $K/\k$. However, as  $K/\k$ is purely inseparable,  we must have $\k(u)=\k$, that is,  $u\in \k$.  Hence we show  that $\phi$ is an isomorphism and  finish the proof.
\end{proof}

\begin{proposition}\label{pi}
 A  multilinear map $(V,T,\Theta)$ is absolutely indecomposable if and only if $Z(V,T,\Theta)$ is local and its residue field is purely inseparable over $\k$.
\end{proposition}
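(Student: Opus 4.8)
The plan is to establish both implications by connecting the center algebra's structure to the behavior of decompositions under field extension. First I would observe that for any field extension $L/\k$, there is a natural isomorphism $Z(V,T,\Theta)\otimes_\k L \cong Z(V\otimes_\k L, T\otimes_\k L, \Theta_L)$, since the center is cut out inside $\End(V)$ by a system of $\k$-linear equations (the defining conditions in Definition~\ref{cmm}), and forming solution spaces of linear systems commutes with flat base change. Combining this with item (1) of Remarks~\ref{indec} (extended to multilinear maps via Theorem~\ref{mcd}), $(V,T,\Theta_L)$ is indecomposable precisely when $Z(V,T,\Theta)\otimes_\k L$ is local. Hence $(V,T,\Theta)$ is absolutely indecomposable if and only if $Z(V,T,\Theta)\otimes_\k L$ is local for every field extension $L/\k$. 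The problem thus reduces to a purely ring-theoretic question: for which finite-dimensional commutative $\k$-algebras $R$ is $R\otimes_\k L$ local for all $L$?

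For the forward direction, suppose $(V,T,\Theta)$ is absolutely indecomposable, so $R:=Z(V,T,\Theta)$ stays local under all base changes. Taking $L=\k$ shows $R$ is local; write $R$ with maximal ideal $m$ and residue field $K=R/m$. Since $R$ is finite-dimensional, $m$ is nilpotent, so $R\otimes_\k L$ is local if and only if $R_{\mathrm{red}}\otimes_\k L \cong K\otimes_\k L$ is local; thus $K\otimes_\k L$ must be local (equivalently, connected, equivalently have no nontrivial idempotents) for every $L/\k$. I would then invoke the standard classification: a finite field extension $K/\k$ has $K\otimes_\k L$ connected for all $L$ if and only if $K/\k$ is purely inseparable — indeed, if $K/\k$ has a nontrivial separable subextension, one can take $L$ to be a normal closure (or just a splitting field of the minimal polynomial of a separable element) and $K\otimes_\k L$ acquires orthogonal idempotents from the distinct embeddings. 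This forces $K/\k$ purely inseparable, giving the "only if" part.

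For the converse, suppose $R=Z(V,T,\Theta)$ is local with maximal ideal $m$ and residue field $K$ purely inseparable over $\k$. Let $L/\k$ be any field extension; I must show $R\otimes_\k L$ is local. As above, it suffices to show $K\otimes_\k L$ has no nontrivial idempotents. Here I would use that $K/\k$ purely inseparable means every element of $K$ has a power (a $p$-th power iterate) lying in $\k$, so $K\otimes_\k L$ is generated over $L$ by nilpotent-modulo-$L$ elements; concretely, for $x\in K$ with $x^{p^n}\in\k$, the element $x\otimes 1 - 1\otimes x^{p^n}{}^{1/p^n}$-type expressions show $(K\otimes_\k L)_{\mathrm{red}}$ is a purely inseparable field extension of $L$, in particular a field, hence has no nontrivial idempotents; therefore neither does $K\otimes_\k L$. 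Consequently $R\otimes_\k L$ is connected, and being finite-dimensional and connected over a field it is local. This proves $(V,T,\Theta)$ remains indecomposable over every $L$, i.e. it is absolutely indecomposable.

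The main obstacle I anticipate is the clean handling of the base-change isomorphism $Z(V,T,\Theta)\otimes_\k L\cong Z(\Theta_L)$ at the level of \emph{algebras} (not merely vector spaces) — one must check the multiplication is respected, which follows from the explicit matrix description in \eqref{ec} but deserves a careful line — together with pinning down exactly the cited characterization of purely inseparable extensions in terms of geometric connectedness of $\operatorname{Spec}(K\otimes_\k L)$; this is where Lemma~\ref{dth} is secretly doing parallel work, and I would either cite a standard reference (e.g. \cite{am,e}) or reprove the two-line idempotent argument directly. Everything else is routine once the reduction "absolutely indecomposable $\iff$ $R\otimes_\k L$ local for all $L$ $\iff$ $R$ local with geometrically connected residue field" is in place.
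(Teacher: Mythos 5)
Your proposal is correct and follows essentially the same route as the paper: reduce absolute indecomposability to "$Z(V,T,\Theta)\otimes_\k L$ is local for all $L$," pass to the residue field $K$ via nilpotency of the maximal ideal, and characterize "$K\otimes_\k L$ local for all $L$" as pure inseparability (using $p$-power iterates landing in the base field for one direction, and splitting a nontrivial separable subextension for the other). Your explicit remark that the center commutes with base change because it is the solution space of a linear system is a point the paper leaves implicit, but it does not change the argument.
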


\begin{proof}
	  A   form $\Theta$ is   absolutely indecomposable if and only if $Z(V,T,\Theta) \otimes k' $ is local for any field extension $k'/\k$ by Theorem \ref{mcd}.
	  Let $m$ be the maximal ideal of $Z(V,T,\Theta)$ and $K=Z(V,T,\Theta)/m$. As $m$ is nilpotent, $Z(V,T,\Theta) \otimes k' $ is local  if and only if $K\otimes k'$ is local. Therefore it is enough to show that $K/\k$ is purely inseparable if and only if $K\otimes k'$ is local for any field extension $k'/\k$.
	   If $K/\k$ is purely inseparable, then $p=\mathrm{char}(\k)>0$ and some $p$-th power of any element of  $K\otimes k'$  belongs to $\k\otimes k'=k'$. Hence the element of $K\otimes k'$ is either nilpotent or invertible, consequently $K\otimes k'$ is local. If $K/\k$ is not purely inseparable, then there exists a maximal  separable subextension $M/\k$ of degree $r>1$. Let $\k^{alg}$ be the algebraic closure of $\k$, and we have $K\otimes \k^{alg}\cong K\otimes_{M} (M \otimes_{\k} \k^{alg})\cong (K\otimes_M \k^{alg})^r$ which is not local.
\end{proof}

\begin{theorem}\label{sm}
Let $(U,S,\Delta)$ and $(V,T,\Theta)$ be two $d$-linear maps.
	\begin{itemize}
		\item[(1)] 	Suppose  $\Delta \simeq_s \Theta.$ Let $\Delta=\Delta_0\oplus \Delta_1 \oplus \cdots \oplus\Delta_r ($resp. $\Theta=\Theta_0\oplus \Theta_1 \oplus \cdots \oplus \Theta_s)$ be the decomposition of $\Delta ($resp. $\Theta)$  as  direct sum of zero maps and indecomposable $d$-linear maps where $\Delta_0$ and $\Theta_0$ are zero maps and the other $\Delta_i$'s and $\Theta_i$'s are indecomposable.  Then we have $r=s$, and  $\Delta_i \simeq \Theta_i$ for each $i$ after suitable  reordering  of $\Theta_i$'s.
		\item[(2)]  Suppose the characteristic of $\k$ is zero or coprime to $d.$ Let  $\Delta$ and $\Theta$ be two  absolutely indecomposable $d$-linear maps. Then $\Delta \simeq_s \Theta$ if and only if  $\Delta \cong a\Theta$  for some $a \in \k^*$.
		\item[(3)]  Suppose $\k$ is algebraically closed and  its characteristic  is zero or coprime to $d$. Then $\Delta \simeq_s \Theta$ if and only if $\Delta \cong \Theta.$
	\end{itemize}		
\end{theorem}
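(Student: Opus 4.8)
The plan is to treat the three parts in turn, bootstrapping from the structural results already available: Proposition~\ref{sc} (a symmetric equivalence induces an isomorphism of centers), Theorem~\ref{mcd} (the dictionary between direct sum decompositions and complete sets of orthogonal idempotents), Proposition~\ref{pi} (absolute indecomposability $\Longleftrightarrow$ local center with purely inseparable residue field), and the local-algebra computation of Lemma~\ref{dth}. Part~(1) is essentially Theorem~\ref{mcd}(3) transported across the center isomorphism of Proposition~\ref{sc}; part~(2) is the technical core, where Lemma~\ref{dth} is the crucial input; and part~(3) is a formal consequence of (1) and (2), once one observes that over an algebraically closed field every indecomposable $d$-linear map is automatically absolutely indecomposable.

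For part~(1), I would first peel off the zero summands. A symmetric equivalence $(\phi_1,\dots,\phi_d,\psi)$ carries the radical of $\Delta$ isomorphically onto that of $\Theta$: applying the defining identity with a reordering that places $\phi_1$ in an arbitrary slot and using that the remaining $\phi_{\sigma_k}$ and $\psi$ are bijections shows $\phi_1$ maps the radical of $\Delta$ into that of $\Theta$, and symmetrically for $\phi_1^{-1}$; hence the radicals have equal dimension, so $\Delta_0\simeq_s\Theta_0$, and the $\phi_j,\psi$ descend to a symmetric equivalence of the nondegenerate quotients. We may thus assume $\Delta$ and $\Theta$ nondegenerate, with $\Delta=\Delta_1\oplus\cdots\oplus\Delta_r$, $\Theta=\Theta_1\oplus\cdots\oplus\Theta_s$ the decompositions into indecomposables. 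Proposition~\ref{sc} provides an algebra isomorphism $\Psi\colon Z(U,S,\Delta)\to Z(V,T,\Theta)$, $\Psi(\phi)=\phi_1\phi\phi_1^{-1}$, which carries a complete set of primitive orthogonal idempotents to such a set; by Theorem~\ref{mcd}(2)--(3) this forces $r=s$. To match the pieces, let $e$ be the primitive idempotent attached to $\Delta_i$, so $U_i=eU$. Since $\phi_1^{-1}\phi_j\in Z(U,S,\Delta)$ (established inside the proof of Proposition~\ref{sc}) and this algebra is commutative, $\phi_1^{-1}\phi_j$ commutes with $e$, whence $\phi_j e=\phi_1 e\phi_1^{-1}\phi_j=\Psi(e)\phi_j$; therefore every $\phi_j$ maps $U_i$ onto $\Psi(e)V=\phi_1(U_i)$. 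Restricting the symmetric equivalence to $U_i$ and $\phi_1(U_i)$ gives $\Delta_i\simeq_s\Theta|_{\phi_1(U_i)}$, and $\Theta|_{\phi_1(U_i)}$ is, after reordering, the summand $\Theta_i$.

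For part~(2), the implication $\Delta\cong a\Theta\Rightarrow\Delta\simeq_s\Theta$ is immediate: from $\psi\Delta(u_1,\dots,u_d)=a\,\Theta(\phi(u_1),\dots,\phi(u_d))$ one gets a symmetric equivalence by taking $\phi_1=\cdots=\phi_d=\phi$ and replacing $\psi$ by $a^{-1}\psi$, all slots now using the same bijection. For the converse, set $\eta_j:=\phi_j\phi_1^{-1}$; by the proof of Proposition~\ref{sc} these are units of $Z(V,T,\Theta)$ (their inverses $\phi_1\phi_j^{-1}$ are again central), with $\eta_1=\mathrm{id}$ and $\phi_j=\eta_j\phi_1$. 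Specializing the defining identity to the identity reordering and substituting $v_j=\phi_1(u_j)$ gives
\[
\psi\,\Delta(\phi_1^{-1}v_1,\dots,\phi_1^{-1}v_d)=\Theta(\eta_1 v_1,\eta_2 v_2,\dots,\eta_d v_d).
\]
Because every $\eta_j$ lies in the commutative algebra $Z(V,T,\Theta)$, one may slide all of them into the first argument, obtaining the right-hand side in the form $\Theta(\eta v_1,v_2,\dots,v_d)$ with $\eta=\eta_1\cdots\eta_d\in Z(V,T,\Theta)^{\times}$. Since $\Theta$ is absolutely indecomposable, Proposition~\ref{pi} says $Z(V,T,\Theta)$ is local with purely inseparable residue field, so by Lemma~\ref{dth} the map $\k^{\times}/(\k^{\times})^d\to Z(V,T,\Theta)^{\times}/(Z(V,T,\Theta)^{\times})^d$ induced by the inclusion of scalars is an isomorphism; hence $\eta=a\mu^d$ for some $a\in\k^{\times}$ and $\mu\in Z(V,T,\Theta)^{\times}$. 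Distributing $\mu$ across all $d$ slots (legitimate since $\mu$ is central) yields $\Theta(\eta v_1,v_2,\dots,v_d)=a\,\Theta(\mu v_1,\dots,\mu v_d)$, so with $\phi:=\mu\phi_1$, a linear bijection,
\[
\psi\,\Delta(u_1,\dots,u_d)=a\,\Theta(\phi(u_1),\dots,\phi(u_d)),
\]
that is, $\Delta\cong a\Theta$.

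For part~(3), over an algebraically closed $\k$ every finite dimensional local $\k$-algebra has residue field $\k$, which is trivially purely inseparable over $\k$, so by Proposition~\ref{pi} every indecomposable $d$-linear map is absolutely indecomposable. Given $\Delta\simeq_s\Theta$, apply part~(1): $\Delta=\Delta_0\oplus\Delta_1\oplus\cdots\oplus\Delta_r$, $\Theta=\Theta_0\oplus\cdots\oplus\Theta_r$ with $\Delta_0\simeq_s\Theta_0$ (equal-dimensional zero maps, hence isomorphic) and $\Delta_i\simeq_s\Theta_i$ for $i\ge1$. Each $\Delta_i,\Theta_i$ ($i\ge1$) being absolutely indecomposable, part~(2) gives $\Delta_i\cong a_i\Theta_i$; and $a_i\Theta_i\cong\Theta_i$ (absorb $a_i$ into the target isomorphism, or, over the algebraically closed field, rescale by a $d$-th root of $a_i$), so $\Delta_i\cong\Theta_i$. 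Taking the direct sum of these isomorphisms together with the one on the zero parts yields $\Delta\cong\Theta$; the reverse implication is the easy direction already used in part~(2). The step I expect to be the main obstacle is the converse of part~(2): after collapsing the symmetric equivalence to a single conjugating bijection, one is left with a defect $\eta$ which is merely a unit of the center, not a scalar, and which in general is not a $d$-th power there. Handling this requires exactly the input of Lemma~\ref{dth}, and the conceptual point is recognising that absolute indecomposability---equivalently, a local center with purely inseparable residue field (Proposition~\ref{pi})---is precisely the hypothesis forcing $\eta$ to be a scalar times a $d$-th power; once that is in place the rest is bookkeeping with central operators.
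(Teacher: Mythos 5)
Your proposal is correct and follows essentially the same route as the paper's proof: reduce to the nondegenerate case by matching radicals, transport primitive idempotents through the center isomorphism of Proposition~\ref{sc} for part~(1), collapse the $\phi_j$ to $\eta_j\phi_1$ with $\eta_j\in Z(V,T,\Theta)^{\times}$ and invoke Proposition~\ref{pi} plus Lemma~\ref{dth} to extract the scalar $a$ and the $d$-th root for part~(2), and combine (1) and (2) for part~(3). No gaps to report.
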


\begin{proof}
(1) Let $U_0=\{u\in U: \Theta(u,v_1, \cdots, v_{d-1})=\Theta(v_1, u, \cdots, v_{d-1})=\cdots=\Theta(v_1, \cdots, v_{d-1}, u)=0, \quad \forall v_1, \cdots, v_{d-1} \in V\}$. Let $\overline{U}=U/U_0$ and define the $d$-linear map $\overline{\Delta}:\overline{U}\times \cdots \times \overline{U} \to S$ by $\overline{\Delta}(\overline{u_1},\cdots,\overline{u_n})=\Delta(u_1,\cdots,u_d)$ where $u_i\in U$ is the lifting of $\overline{u_i}$ for each $1\leq i\leq n$.
 Then $\overline{\Delta}$ is nondegenerate by construction. 	Choose a subspace $U'$ of $U$ such that $U=U_0\oplus U'$ and let $\Delta'=\Delta_{|U'}$. It is easy to verify that we have an inner direct sum decomposition $(U,S,\Delta)=(U_0,S,0)\oplus (U',S,\Delta')$.  The natural morphism $U'\to U \to \overline{U}$ gives an isomorphism between $(U',S,\Delta')$ and $(\overline{U},S,\overline{\Delta})$  and we have an outer direct sum decomposition  $(U,S,\Delta)\cong (U_0,S,0)\oplus (\overline{U},S,\overline{\Delta})$. Similarly, we  construct $V_0, (\overline{V}=V/V_0,T,\overline{\Theta})$ and $(V,T,\Theta)\cong (V_0,T,0)\oplus (\overline{V},T,\overline{\Theta})$.

 Let $\psi:S\to T$ and $\phi_1,\cdots, \phi_d: U\to V$ be the  linear bijections such that  $\psi \Delta(u_1,\cdots, u_d)=
\Theta(\phi_{\sigma_1}(u_1),\cdots,\phi_{\sigma_d}(u_d))$ for all $u_1,\cdots, u_d\in U$ and each reordering $\sigma_1,\cdots,\sigma_d$ of $1,\cdots,d$.
 Then we have  $\phi_i(U_0)\subset V_0$  and $\phi^{-1}_i(V_0) \subset U_0$ for each $1 \leq i\leq d$  by their definitions. Thus $\phi_i$ induces the isomorphisms  between $U_0$ (resp. $\overline{U}$) and $V_0$ (resp. $\overline{V}$) for each $1\leq i \leq d$. Let $\overline{\phi_i}$ be induced isomorphism between $\overline{U}$ and $\overline{V}$, then $\overline{\phi_1},\cdots,\overline{\phi_d}$ gives an symmetrical equivalence between $\overline{\Delta}$ and $\overline{\Theta}$. As $U_0$ and $V_0$ are isomorphic as vector space, the zero $d$-linear maps over them are also isomorphic. Therefore we can reduce the proposition to the nondegenerate case, that is, we assume $\Delta$ and  $ \Theta$ are nondegenerate.

 By Theorem \ref{mcd}, we can  uniquely decompose  $(U,S, \Delta)=(U_1,S,\Delta_1)\oplus \cdots \oplus (U_r,S,\Delta_r)$ as  an inner direct sum of indecomposable summands such that each $\Delta_i$ is correspondent to a primitive idempotent $e_i$ of the center $Z(U,S,\Delta)$ and $U_i= e_i(U)$. As $\Delta$ and $\Theta$ are symmetrically equivalent, we have $Z(U,S,\Delta) \cong Z(V,T,\Theta)$ by sending each $\phi \in Z(U,S, \Delta)$ to $\phi_1 \phi \phi_1^{-1}$ by Proposition \ref{sc}.
 By Theorem \ref{mcd} again, we  obtain the unique decomposition  $(V,T, \Theta)=(V_1,T,\Theta_1)\oplus \cdots \oplus (V_r,T,\Theta_r)$ as  an inner direct sum of indecomposable summands such that $V_i=\phi_1 e_i \phi_1^{-1}(V)$ for each $i$. Moreover the restrictions of  $\phi_k$'s on $U_i$ give an symmetric equivalence between $\Delta_i$ and $\Theta_i$ for each $i$ since $\phi_k(U_i)=\phi_ke_i(U)=\phi_ke_i\phi_k^{-1}\phi_k(U)=\phi_1e_i\phi_1^{-1}(V)=V_i$ by the proof of Proposition \ref{sc}.

(2) If $\Delta \cong a\Theta$, it is obvious that $\Delta \simeq_s \Theta$.  Conversely, if $\Delta \simeq_s \Theta$, 	we have  $\phi_i\phi_j^{-1} \in Z(V,T,\Theta)$ for all $1\leq i,j\leq d$ which was shown in the proof of Proposition \ref{sc}. For each $i$, we can write $\phi_i=a_i\phi_1$ for some $a_i\in Z(V,T,\Theta)^{\times}$. As $(V,T,\Theta)$ is  absolutely indecomposable, its center $Z(V,T,\Theta)$ is a commutative finite  local algebra with  purely inseparable residue field by Proposition  \ref{pi}. By Lemma \ref{dth}, there exists a scale $a\in \k^{\times}$ such that the product $a^{-1}\cdot a_1\cdots a_d$  has a $d$-th root $b\in Z(V,T,\Theta)$.
	Since
	\begin{eqnarray*}
	 \psi \Delta(u_1,\cdots, u_d)&=&\Theta(\phi_{1}(u_1),\cdots,\phi_{d}(u_d))\\
	&=&\Theta(a_1\phi_1(u_1),\cdots,a_d\phi_1(u_d))\\
	&=&\Theta(a_1\cdots a_d \phi_1(u_1),\phi_1(u_2),\cdots,\phi_1(u_d))\\
	&=& a\Theta(a^{-1}\cdot a_1\cdots a_d \phi_1(u_1),\phi_1(u_2),\cdots,\phi_1(u_d))\\
	&=& a\Theta(b^d\phi_1(u_1),\phi_1(u_2),\cdots,\phi_1(u_d))\\
	&=& a\Theta(b\phi_1(u_1),b\phi_1(u_2),\cdots,b\phi_1(u_d))
\end{eqnarray*}we have $\Delta\cong a\Theta$.

 (3)The isomorphism of $d$-linear maps  clearly implies they are also symmetrically  equivalent, and we only need to show the converse.  Suppose $(U,S,\Delta)\simeq_s (V,T,\Theta)$, by item (1) we have the decompositions $(U,S,\Delta)\cong (U_0,S,0)\oplus (U_1,S,\Delta_1)\oplus \cdots \oplus (U_r, S, \Delta_r)$ and $(V,T,\Theta)\cong(V_0,T,0)\oplus (V_1,T,\Theta_1)\oplus \cdots \oplus (V_r,T,\Theta_r)$ such that all $\Delta_i$'s and $\Theta_i$'s are indecomposable and $\Delta_i \simeq_s \Theta_i$  for each $i$.  As $\k$ is algebraically closed, $\Delta_i$ and $\Theta_i$ are also absolutely  indecomposable.  Then for each $i$, we have $\Delta_i \cong a_i\Theta_i \cong \Theta_i$ for some $a_i\in \k^*$ by item (2), and consequently we prove $\Delta \cong \Theta$.
\end{proof}

As direct consequences, we recover easily the related main results of \cite{bfms,bs}.
\begin{corollary}
\begin{itemize}
	\item[(1)]
Two $d$-linear maps over the complex field $\C$ are isomorphic if and only if they are symmetrically equivalent to each other.

\item[(2)]	When $d$ is a positive odd integer, two $d$-linear maps over the real field $\R$ are isomorphic if and only if they are symmetrically equivalent to each other.
	
\item[(3)]	Assume $d$ is a positive even  integer. If two  $d$-linear maps $(U,S,\Delta)$ and $(V,T,\Theta)$ over $\R$ are symmetrically equivalent, then there exist the decompositions $\Delta\cong \Delta_1\oplus \Delta_2$ and $\Theta\cong \Theta_1\oplus \Theta_2$ such that $\Delta_1$ is isomorphic to $\Theta_1$ and $\Delta_2$ is isomorphic to $-\Theta_2$.
\end{itemize}	
\end{corollary}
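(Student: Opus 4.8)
The plan is to deduce all three parts from Theorem \ref{sm}. Part (1) is immediate: $\C$ is algebraically closed of characteristic $0$, so Theorem \ref{sm}(3) applies verbatim and gives $\Delta \cong \Theta$ whenever $\Delta \simeq_s \Theta$.

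For parts (2) and (3) the ground field is $\R$, and the first step is to feed a symmetric equivalence $\Delta \simeq_s \Theta$ into Theorem \ref{sm}(1). This yields isomorphisms $\Delta \cong \Delta_0 \oplus \Delta_1 \oplus \cdots \oplus \Delta_r$ and $\Theta \cong \Theta_0 \oplus \Theta_1 \oplus \cdots \oplus \Theta_r$ in which $\Delta_0, \Theta_0$ are zero maps on spaces of equal dimension (hence $\Delta_0 \cong \Theta_0$) and $\Delta_i \simeq_s \Theta_i$ is indecomposable for $1 \le i \le r$. So it suffices to treat a single indecomposable pair $\Delta_i \simeq_s \Theta_i$ over $\R$. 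One cannot simply quote Theorem \ref{sm}(2) here, because over $\R$ an indecomposable $d$-linear map need not be absolutely indecomposable: by Proposition \ref{pi} this fails precisely when the residue field of its center is $\C$ instead of $\R$. Instead I would rerun the computation from the proof of Theorem \ref{sm}(2): writing the equivalence as $\phi_k = a_k \phi_1$ with $a_k \in Z(\Theta_i)^\times$ and sweeping the central units into the first argument gives $\psi \Delta_i(u_1, \dots, u_d) = \Theta_i(c_i \phi_1(u_1), \phi_1(u_2), \dots, \phi_1(u_d))$ with $c_i = a_1 \cdots a_d \in Z(\Theta_i)^\times$. Consequently $\Delta_i \cong \lambda \Theta_i$ (via $b\phi_1$ and $\psi$) as soon as $\lambda^{-1} c_i$ equals a $d$-th power $b^d$ in $Z(\Theta_i)^\times$ for some $\lambda \in \R^\times$, by pulling $\lambda$ out of the first slot and distributing the $d$-th root over all slots.

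The key step is thus to pin down the class of $c_i$ in $Z(\Theta_i)^\times/(Z(\Theta_i)^\times)^d$. Since $Z(\Theta_i)$ is a finite commutative local $\R$-algebra and $\R$ has characteristic $0$, the first assertion of Lemma \ref{dth} gives $Z(\Theta_i)^\times/(Z(\Theta_i)^\times)^d \cong K_i^\times/(K_i^\times)^d$, where $K_i$ is the residue field --- a finite extension of $\R$, hence $\R$ or $\C$. If $d$ is odd, $x \mapsto x^d$ is surjective on both $\R^\times$ and $\C^\times$, so the quotient is trivial; then $c_i$ is a $d$-th power, $\Delta_i \cong \Theta_i$ for every $i$, and therefore $\Delta \cong \Theta$, which proves (2). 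If $d$ is even, the quotient is trivial when $K_i \cong \C$ and is $\cong \R^\times/(\R^\times)^d \cong \{\pm 1\}$ when $K_i \cong \R$; in either case $c_i$ or $-c_i$ is a $d$-th power, so $\Delta_i \cong \Theta_i$ or $\Delta_i \cong -\Theta_i$. Collecting the indices of the first type (along with the zero summands) into $\Delta_1, \Theta_1$ and those of the second type into $\Delta_2, \Theta_2$ --- all isomorphisms arranged with the common $\psi$ and using $(-\Theta') \oplus (-\Theta'') = -(\Theta' \oplus \Theta'')$ --- gives $\Delta_1 \cong \Theta_1$ and $\Delta_2 \cong -\Theta_2$, which is (3).

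I expect the only real subtlety to be the one already flagged: (2) and (3) are not formal corollaries of Theorem \ref{sm}(2),(3), since over $\R$ the center of an indecomposable summand can have residue field $\C$, so one must use the residue-field isomorphism of Lemma \ref{dth} --- which holds for any finite local $\R$-algebra --- rather than its purely inseparable refinement, and then supply the elementary computation of $K^\times/(K^\times)^d$ for $K \cong \R, \C$. The remaining work is routine bookkeeping of the summands.
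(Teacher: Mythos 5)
Your proposal is correct and follows essentially the same route as the paper: reduce via Theorem \ref{sm}(1) to indecomposable summands, write $\phi_k=a_k\phi_1$ with central units, observe that the residue field of the local center is $\R$ or $\C$, and apply the first part of Lemma \ref{dth} together with the computation of $K^{\times}/(K^{\times})^d$ to conclude $\Delta_i\cong\pm\Theta_i$ before regrouping. Your version is if anything slightly cleaner in tracking the single product $c_i=a_1\cdots a_d$ rather than each $a_i$ separately, and in explicitly flagging why Theorem \ref{sm}(2) cannot be quoted verbatim over $\R$; otherwise the arguments coincide.
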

\begin{proof}We only prove (3) and the others are similar. We first prove the statement in the case of indecomposable forms. Suppose  that two  $d$-linear maps $(U,S,\Delta)$ and $(V,T,\Theta)$ over $\R$ are indecomposable and  symmetrically equivalent.
		Let  $\psi:S\to T$ and  $\phi_1,\cdots, \phi_d: U\to V$ be the  linear bijections such that  $\psi \Delta(u_1,\cdots, u_d)=
	\Theta(\phi_{\sigma_1}(u_1),\cdots,\phi_{\sigma_d}(u_d))$ for all $u_1,\cdots, u_d\in U$ and each reordering $\sigma_1,\cdots,\sigma_d$ of $1,\cdots,d$.
	For each $i$, we can write $\phi_i=a_i\phi_1$ for some $a_i\in Z(V,T,\Theta)^{\times}$ by Proposition \ref{sc}. As $\Theta$ is indecomposable, the center $Z(V,T,\Theta)$
	is local and its residue field  is $\R$ or $\C$. If the residue field  is $\C$, then each $a_i$ is a $d$-th power in $Z(V,T,\Theta)$ by Lemma \ref{dth}. The same argument as Theorem \ref{sm} (2) shows that $\Delta \cong \Theta$. If the residue field is $\R$, then $a_i$ or $-a_i$ is a $d$-th power, and we similarly  have $\Delta \cong \Theta$ or $\Delta \cong -\Theta$.
	
	In general,  we have the decompositions $(U,S,\Delta)\cong (U_0,S,0)\oplus (U_1,S,\Delta'_1)\oplus \cdots \oplus (U_r,S,  \Delta'_r)$ and $(V,T,\Theta)\cong(V_0,T,0)\oplus (V_1,T,\Theta'_1)\oplus \cdots \oplus (V_r,T,\Theta'_r)$ such that all $\Delta'_i$'s and $\Theta'_i$'s are indecomposable and $\Delta_i' \simeq_s \Theta'_i$  for each $i$. By the previous argument on indecomposable forms, we have $\Delta_i'\cong \Theta_i'$ or $\Delta_i'\cong -\Theta_i'$. Let $\Delta_1$ be the direct sum of $\Delta_0$ and all $\Delta_i'$ 's such that $\Delta_i\cong \Theta_i'$, and  let $\Delta_2$ be the direct sum of all $\Delta_i'$ 's such that $\Delta_i\cong -\Theta_i'$. Similarly define $\Theta_1$ and $\Theta_2$. Combining  together, we have  $\Delta\cong \Delta_1\oplus \Delta_2$ and $\Theta\cong \Theta_1\oplus \Theta_2$ with  $\Delta_1 \cong \Theta_1$ and $\Delta_2 \cong -\Theta_2$.
\end{proof}

Finally, we apply Theorem \ref{sm} to provide a linear algebraic proof for Theorem \ref{do}, the well known Torelli type result of Donagi \cite[Proposition 1.1]{d}. Let $f(x_1, \cdots, x_n) \in \k[x_1, \cdots, x_n]$ be a homogeneous polynomial of degree $d$. Let $J(f)$ be its Jacobian ideal generated by $\frac{\partial f}{\partial x_1},\cdots,\frac{\partial f}{\partial x_n}$. The classical Torelli problem concerns about whether $J(f)$ determines $f.$

Homogeneous polynomials are naturally associated to symmetric multilinear forms and symmetric tensors, see e.g. \cite{hlyz, hlyz2}.
Write $f$ in the symmetric way
\[
f(x_1, \cdots, x_n) = \sum_{1 \le i_1, \cdots, i_d \le n} a_{i_1 \cdots i_d} x_{i_1} \cdots x_{i_d}
\]
where the $a_{i_1 \cdots i_d}$'s are symmetric with respect to their indices. Let $V=\k^n$ with a basis $e_1, \dots, e_n.$ Define the symmetric $d$-linear form $\Theta \colon V \times \cdots \times V \longrightarrow \k$ by \[\Theta(e_{i_1}, \cdots, e_{i_d})=a_{i_1 \cdots i_d}, \quad \forall 1 \le i_1, \cdots, i_d \le n.\] The pair $(V, \Theta)$ is called the associated symmetric $d$-linear form of $f$ under the basis $e_1, \cdots, e_n.$ The homogeneous polynomial $f$ and the symmetric multilinear form $(V, \Theta)$ is explicitly related as \[ f(x_1, \cdots, x_n) = \Theta\left(\sum_{1 \le i \le n}x_i e_i, \ \dots, \ \sum_{1 \le i \le n}x_i e_i\right).\]
For each $1\leq i \leq n$, let $\Theta_i$ be the $(d-1)$-linear form on $V$ such that
$$\Theta_i(v_1,\cdots,v_{d-1})=\Theta(e_i,v_1,\cdots,v_{d-1}), \forall v_1,\cdots,v_{d-1}\in V.$$  It is well known that
\begin{equation}\label{der}
\frac{1}{d}\frac{\partial f}{\partial x_i} =\Theta_i\left(\sum_{1 \le i \le n}x_i e_i, \ \cdots, \ \sum_{1 \le i \le n}x_i e_i\right).
\end{equation}
In other words, $\Theta_i$ is associated to $\frac{1}{d}\frac{\partial f}{\partial x_i}$ under the basis $e_1, \dots, e_n.$
Recall that centers can also be equivalently defined in terms of homogeneous polynomials, see \cite{h1, hlyz}. Let $H$ be  the Hessian matrix $(\frac{\partial^2 f}{\partial x_i \partial x_j})_{1 \le i,\ j \le n}$ of $f$ and define its center $Z(f)$ as  \[Z(f)=\{ X \in \k^{n \times n} \mid (HX)^T =  HX \}.\]
It is clear that $Z(V,\Theta) \cong Z(f)$.

\noindent{\bf Proof of Theorem \ref{do}}
Let   $\Theta$ and  $\Delta$ be the $d$-linear forms associated to $f$ and $g$ respectively. Let $E(f)$ be the vector space spanned by $\frac{\partial f}{\partial x_1},\cdots,\frac{\partial f}{\partial x_n}$ which is the $(d-1)$-th part of $J(f)$. If $\dim J< n$, then $f$ is degenerate and let $V_0$ be the subspace consisting  of the vectors along which the partial derivative of $f$ is zero and choose another subspace $V_1$ such that $V=V_0\oplus V_1$. Then we have a decomposition $(V,\Theta)=(V_0,0)\oplus (V_1,\Theta|_{V_1})$ by Theorem \ref{sm}. As $J(f)=J(g)$ and thanks to Equation $\eqref{der}$, we have a similar direct sum decomposition $(V,\Delta)=(V_0,0)\oplus (V_1,\Delta|_{V_1})$. Moreover the restrictions of $f$ and $g$ on $V_1$ also have the same Jacobian ideal. Therefore, we reduce the theorem to the nondegenerate case.
	
As $J(f)=J(g)$ and $\dim E(f)=n$, there exists a matrix $A=(a_{ij})_{n\times n}\in GL_n(\k)$ such that
\begin{equation} \label{jac}
(\frac{\partial g}{\partial x_1},\cdots,\frac{\partial g}{\partial x_n})=(\frac{\partial f}{\partial x_1},\cdots,\frac{\partial f}{\partial x_n})A.
\end{equation}
Define $\phi \in \End(V)$ by $\phi(e_i)=\sum\limits_{j=1}^{n}a_{ji}e_j$ for all $1\leq i \leq n.$ Note by \cite[Lemma 3.10]{hlyz2} that $A \in Z(f),$ hence $\phi \in Z(V,\Theta).$  In addition, for all $1 \le i_1, \dots, i_d \le n$ we have
\begin{eqnarray*}
\Theta(\phi(e_{i_1}), e_{i_2}, \cdots, e_{i_d}) &=& \Theta(\sum\limits_{j=1}^{n}a_{ji_{1}}e_j, e_{i_2}, \cdots, e_{i_d})  \\
&=& \sum\limits_{j=1}^{n}a_{ji_{1}} \Theta_j(e_{i_2}, \cdots, e_{i_d}) \\
&=& \Delta_{i_1} (e_{i_2}, \cdots, e_{i_d})  \quad (by \ \eqref{der} \ \& \ \eqref{jac})\\
&=& \Delta(e_{i_1}, e_{i_2}, \cdots, e_{i_d}).
\end{eqnarray*}
It follows that $\phi, \operatorname{Id}, \dots, \operatorname{Id}$ make a symmetric equivalence between $\Theta$ and $\Delta.$
Then by item (3) of Theorem \ref{sm}, $f$ and $g$ are equivalent up to an invertible linear transformation.

\end{document}